\theoremstyle{plain}
\title[SKh and HH]{Sutured Khovanov homology, Hochschild homology, and the Ozsv{\'a}th-Szab{\'o} spectral sequence}
\author{Denis Auroux}
\thanks{DA was partially supported by NSF grant DMS-1007177.}
\address{UC Berkeley, Department of Mathematics, 970 Evans Hall \# 3840, Berkeley CA 94720, USA}
\email{auroux@math.berkeley.edu}
\author{J. Elisenda Grigsby}
\thanks{JEG was partially supported by NSF grant number DMS-0905848 and NSF CAREER award DMS-1151671.}
\address{Boston College; Department of Mathematics; 301 Carney Hall; Chestnut Hill, MA 02467, USA}
\email{grigsbyj@bc.edu}
\author{Stephan M. Wehrli}
\thanks{SMW was partially supported by NSF grant number DMS-1111680.}
\address{Syracuse University; Mathematics Department; 215 Carnegie; Syracuse, NY 13244, USA}
\email{smwehrli@syr.edu}
\theoremstyle{plain}
\newtheorem{theorem}{Theorem}[section]
\newtheorem{lemma}[theorem]{Lemma}
\newtheorem{proposition}[theorem]{Proposition}
\newtheorem{corollary}[theorem]{Corollary}
\newtheorem{conjecture}[theorem]{Conjecture}
\newtheorem*{theoremSKhisHH}{Theorem~\ref{thm:SKhisHH}}
\theoremstyle{definition}
\newtheorem{notation}[theorem]{Notation}
\newtheorem{definition}[theorem]{Definition}
\newtheorem{remark}[theorem]{Remark}
\newcommand{\R}{\ensuremath{\mathbb{R}}}
\newcommand{\Z}{\ensuremath{\mathbb{Z}}}
\newcommand{\C}{\ensuremath{\mathbb{C}}}
\newcommand{\F}{\ensuremath{\mathbb{F}}}
\newcommand{\Id}{\ensuremath{\mbox{\textbb{1}}}}
\newcommand{\boldSigma}{\ensuremath{\mbox{\boldmath $\Sigma$}}}
\newcommand{\cF}{\ensuremath{\mathcal{F}}}
\newcommand{\cG}{\ensuremath{\mathcal{G}}}
\newcommand{\cA}{\ensuremath{\mathcal{A}}}
\newcommand{\cM}{\ensuremath{\mathcal{M}}}
\newcommand{\cN}{\ensuremath{\mathcal{N}}}
\newcommand{\cR}{\ensuremath{\mathcal{R}}}
\newcommand{\cQ}{\ensuremath{\mathcal{Q}}}
\newcommand{\cZ}{\ensuremath{\mathcal{Z}}}
\newcommand{\SKh}{\ensuremath{\mbox{SKh}}}
\newcommand{\CKh}{\ensuremath{\mbox{CKh}}}
\newcommand{\SpanF}{\ensuremath{\mbox{Span}_\F}}
\newcommand{\Braid}{\ensuremath{\mathfrak{B}}}
\begin{document}
\bibliographystyle{plain}

\begin{abstract} In \cite{MR1862802}, Khovanov-Seidel constructed a faithful action of the $(m+1)$--strand braid group, $\Braid_{m+1}$, on the derived category of left modules over a quiver algebra, $A_m$. We interpret the Hochschild homology of the Khovanov-Seidel braid invariant as a direct summand of the {\em sutured Khovanov homology} of the annular braid closure.\end{abstract}
\maketitle

\section{Introduction}
In \cite{MR1740682}, Khovanov constructed an invariant of links in $S^3$ that takes the form of a bigraded abelian group arising as the homology groups of a combinatorially-defined chain complex. The graded Euler characteristic of Khovanov's link homology recovers the Jones polynomial.  

In \cite{MR2113902}, Asaeda-Przytycki-Sikora showed how to extend Khovanov's construction to obtain an invariant of links in any thickened surface with boundary, $F$. When $F$ is an annulus, the topological situation is particularly natural; the thickened annulus, $A \times I$, can be identified with the complement of a standardly-imbedded unknot in $S^3$. As observed by L. Roberts \cite{GT07060741}, the data of the imbedding $A \times I \subset S^3$ endows the Khovanov complex associated to $L \subset S^3$ with a filtration, and the resulting invariant of $L \subset (A \times I \subset S^3)$ is the filtered chain homotopy type of the complex. The induced spectral sequence converges to $\mbox{Kh}(L)$, the Khovanov homology of $L \subset S^3$. 

This filtered complex is particularly well-suited for studying braids up to conjugacy. Explicitly, letting $\Braid_{m+1}$ denote the $(m+1)$--strand braid group, 
we can form the {\em annular closure}, $\widehat{\sigma} \subset A \times I$, of any braid $\sigma \in \Braid_{m+1}$, and the isotopy class of the resulting annular link (hence, the filtered chain homotopy type of the Khovanov complex) is an invariant of the conjugacy class of $\sigma \in \Braid_{m+1}$. 
Indeed, the homology of the associated graded complex, the so-called {\em sutured annular Khovanov homology} of $L \subset A \times I$, 
detects the trivial braid for every $m \in \Z^{\geq 0}$, though it cannot distinguish all pairs of non-conjugate braids (cf. \cite{GT12122222}).  

The main goal of this paper is to establish a relationship between the sutured Khovanov homology of a braid closure and another ``categorified" braid invariant appearing in work of Khovanov and Seidel.  
In \cite{MR1862802}, Khovanov-Seidel 
consider a family of graded associative algebras, $A_m$, each realized as the quotient of a path algebra by a collection of quadratic relations. To each $\sigma \in \Braid_{m+1}$ they associate a differential graded bimodule, $\cM_\sigma$, over $A_m$, well-defined up to homotopy equivalence, and prove that tensoring with $\cM_\sigma$ over $A_m$ yields a well-defined endofunctor on $D^b(A_m-\mbox{mod})$, the bounded derived category of left $A_m$--modules. By relating the action of $\Braid_{m+1}$ on $D^b(A_m-\mbox{mod})$ to its action on the Fukaya category of a certain symplectic manifold (the Milnor fiber of an $A_m$--type singularity), they prove that their categorical action is {\em faithful}, i.e., $\cM_\sigma \cong \cM_{\Id}$ iff $\sigma = \Id$. 
The Khovanov-Seidel construction also gives rise to a braid conjugacy class invariant: the {\em Hochschild homology} of $A_m$ with coefficients in $\mathcal{M}_\sigma$, denoted $HH(A_m, \cM_\sigma)$.

Our main result is a proof that these two braid conjugacy invariants are closely-related; one is a direct summand of the other. To state our result more precisely, we first note that while Khovanov homology is {\em bigraded}, sutured annular Khovanov homology is {\em triply-graded} (the extra {\em filtration} grading appropriately measures ``wrapping" around the annulus). Denote by $\mbox{SKh}(L;f)$ the sutured Khovanov homology in filtration grading $f$.  We prove:

\begin{theoremSKhisHH}
Let $\sigma \in \Braid_{m+1}$,  and $m(\widehat{\sigma}) \subset A \times I$ the mirror of its annular closure.  \[ \SKh\left(m(\widehat{\sigma});m-1\right) \cong HH\left(A_m, \mathcal{M}_\sigma\right).\]
\end{theoremSKhisHH}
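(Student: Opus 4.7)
The plan is to realize both sides of the claimed isomorphism as chain complexes that visibly agree, by constructing a diagrammatic model of $A_m$ in which the Khovanov--Seidel bimodule $\mathcal{M}_\sigma$ is computed by the very same cube of resolutions used to build $\SKh$ of the (annular) closure. First I would give a combinatorial incarnation of $A_m$ as an arc algebra: write $A_m$ in terms of crossingless cup/cap matchings on a disk with a distinguished pair of ``anchor'' boundary points, so that the primitive idempotents correspond to preferred matchings of $m+1$ points, and the generators $\rho_i,\bar{\rho}_i$ of the Khovanov--Seidel quiver correspond to local cap--cup resolutions between adjacent matchings. This is parallel to Chen--Khovanov and to the diagrammatic bimodule constructions that were used by Khovanov--Seidel themselves in Section~2 of their paper.

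Next I would show that, under this identification, the Khovanov--Seidel bimodule $\mathcal{M}_{\sigma_i}$ for a single generator is the mapping cone of the saddle map between the oriented and disoriented resolutions of the crossing, applied bimodule-wise. By the tensor-product formalism this extends to any word, so $\mathcal{M}_\sigma$ is diagrammatically computed by the cube of resolutions of $\sigma$ viewed as an $(m+1,m+1)$-tangle, sandwiched on both sides by $A_m$. At this point Hochschild homology enters: I would use that $HH(A_m,\mathcal{M}_\sigma)=\mathcal{M}_\sigma\otimes_{A_m\otimes A_m^{op}}A_m$ is, in these diagrammatic terms, the result of gluing the top and bottom of the tangle together through the ``cylinder" identity bimodule. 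Geometrically this gluing is the \emph{annular closure}, up to the mirror convention forced on us by Khovanov--Seidel's sign choices, giving the $m(\widehat{\sigma})$ appearing in the statement.

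The remaining task is to match the output with a specific filtration grading of $\SKh$. After the gluing, each cube vertex consists of circles in $A\times I$, some homotopically trivial (bounding disks in $A$) and some nontrivial (wrapping the annulus). The diagrammatic anchor points in the model of $A_m$ force one distinguished nontrivial ``anchor circle'' to persist at every vertex, and the remaining $m$ arcs give a standard Khovanov-style Frobenius structure on the trivial circles but only a \emph{one}-dimensional piece of the annular Frobenius algebra on each nontrivial circle. Tallying the $\pm 1$ filtration weights of the nontrivial circles against the anchor convention yields exactly the slice of the sutured Khovanov complex in filtration $m-1$. I would verify agreement of differentials in the two pictures by checking that the Hochschild boundary corresponds under the gluing to the usual merge/split maps on nontrivial circles restricted to this filtration slice.

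The main obstacle will be the precise bookkeeping of the filtration grading, specifically justifying the shift that lands us at $m-1$ rather than at the top grading $m+1$. The discrepancy comes from the diagrammatic model: the anchor arc pins one annular generator to a fixed label, which shifts the weight by $-2$ from the maximum. Verifying this shift canonically, and in a way compatible with the bimodule action of $A_m$, is where I expect the bulk of the technical work to be; the other potential sticking point is handling negative crossings (and the mirror) coherently so that the Khovanov--Seidel bimodule for $\sigma_i^{-1}$ matches the appropriate oriented resolution on the sutured side.
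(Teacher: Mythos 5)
Your picture of the ``easy half'' of the argument is essentially the one the paper uses: identify the cube of resolutions computing $\mathcal{M}_\sigma$ with the cube computing the annular Khovanov complex of $m(\widehat{\sigma})$, glue top to bottom, and check vertex-by-vertex and edge-by-edge that what survives is exactly the $f=m-1$ slice. (The paper does this without an arc-algebra model, working directly with the path algebra: at the trivial-resolution vertex the $m+1$ idempotents $(0),\dots,(m)$ are matched with the $m+1$ enhanced states having a single $v_-$ via a nontrivial change of basis $(i)\mapsto \theta_i+\theta_{i-1}$, chosen precisely so that the edge maps $\beta_i,\gamma_i$ agree with the merge/split maps. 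Your ``anchor circle'' bookkeeping would have to reproduce something equivalent to this.)

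The genuine gap is in the sentence ``$HH(A_m,\mathcal{M}_\sigma)=\mathcal{M}_\sigma\otimes_{A_m\otimes A_m^{op}}A_m$.'' Hochschild homology is the \emph{derived} self-tensor product $\mathrm{Tor}_*^{A_m^e}(A_m,\mathcal{M}_\sigma)$; the underived tensor product you write down is only its degree-zero part, namely the coinvariant quotient $\mathcal{Q}(\mathcal{M}_\sigma)$ of Definition~\ref{defn:coinvariant}. The diagrammatic gluing through the identity cylinder computes this underived object, and it is this object --- not $HH$ a priori --- that your cube comparison identifies with $\CKh(m(\widehat{\sigma});m-1)$. To finish, you must show all higher $\mathrm{Tor}$ vanishes, which is Proposition~\ref{prop:HHiscoinvar} of the paper and is not automatic: $A_m$ is projective as a one-sided module but not as a bimodule over $A_m^e$. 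The paper handles the cube vertices containing a factor $P_i\otimes {}_iP$ by observing these are projective bimodules (summands of $A_m\otimes A_m$), but the all-$A_m$ vertex requires showing $HH_n(A_m,A_m)=0$ for $n>0$; this is done by transporting the computation through derived equivalences to the directed algebra $B_m^{Kh}$ (whose composable cyclic bar complex collapses) and comparing dimensions. Without some substitute for this step your argument proves $\SKh(m(\widehat{\sigma});m-1)\cong H_*(\mathcal{Q}(\mathcal{M}_\sigma))$ but not the stated isomorphism with $HH(A_m,\mathcal{M}_\sigma)$.
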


Informally, the Hochschild homology of the Khovanov-Seidel bimodule, $\cM_\sigma$, agrees with the ``next-to-top" graded piece of $\mbox{SKh}(m(\widehat{\sigma}))$.\footnote{It is an immediate consequence of the definitions (see Section \ref{sec:SKh}) that if $\sigma \in \Braid_{m+1}$, then $\mbox{SKh}(\widehat{\sigma};f) = 0$ unless $f \in \{-(m+1), -(m-1), \ldots, m-1, m+1\}$.}  Moreover, up to an overall normalization, the bigrading on $\mbox{SKh}(\widehat{\sigma})$ agrees with a natural bigrading on $HH\left(A_m, \mathcal{M}_\sigma\right).$ This is made explicit in the more precise version of Theorem \ref{thm:SKhisHH} stated in Section \ref{sec:MainThm}.

Those readers familiar with strongly-based mapping class bimodules in bordered Heegaard-Floer homology should recognize Theorem \ref{thm:SKhisHH} as a Khovanov homology analogue of (the $1$--moving strand case of) \cite[Thm. 7]{BorderedBimodules}. Indeed, letting $\boldSigma(\widehat{\sigma})$ denote the double-cover of $S^3$ branched over $\widehat{\sigma}$, $p:\boldSigma(\widehat{\sigma}) \rightarrow S^3$ the covering map, and $K_B$ the braid axis, one corollary of Theorem \ref{thm:SKhisHH} is a new proof of the existence of a spectral sequence relating the ``next-to-top" filtration grading of the sutured Khovanov homology of a braid closure to the ``next-to-bottom" Alexander grading of the knot Floer homology of the fibered link, $p^{-1}(K_B)$. See Theorem \ref{thm:OzSzspecseq} for a more precise statement and \cite{GT07060741} (for even index, \cite{AnnularLinks}) for the original proof of this result.

We should also point out that the Khovanov-Seidel algebra is a special case ($k=1$) of a family of algebras $A^{k,n-k}$ (where $n = m+1$), defined independently by Chen-Khovanov \cite{QA0610054} and Stroppel \cite{StroppelAlgebra} (see also \cite{BrundanStroppel}), that give rise to a categorification of the $U_q(\mathfrak{sl}_2)$ Reshetikhin-Turaev invariant of tangles (cf. \cite[Thm. 66]{FrenkelStroppelSussan}). These algebras are subquotient algebras of Khovanov's arc algebra $H^n$ \cite{Tangles} and can be identified with endomorphism algebras of projective generators of certain blocks, $\mathcal{O}_{k,n-k}$, of category $\mathcal{O}$. They also admit a geometric interpretation in terms of convolution algebras of $2$--block Springer fibers \cite{StroppelWebster}. As in the Khovanov-Seidel setting, one obtains, for each $k$, a categorified braid (indeed, tangle) invariant that takes the form of a (derived equivalence class of a) bimodule, $\cM_{\sigma}^{k}$,  over the algebra $A^{k,n-k}$. The following conjecture, generalizing Theorem \ref{thm:SKhisHH}, arose during conversations with Catharina Stroppel:

\begin{conjecture} Let $\sigma \in \Braid_{n}$, and $m(\widehat{\sigma}) \subset A \times I$ the mirror of its annular closure. \[\SKh(m(\widehat{\sigma});n-2k) \cong HH(A^{k,n-k}, \cM_\sigma^k)\]
\end{conjecture}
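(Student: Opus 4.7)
The natural strategy is to mirror, for each value of $k$, the argument establishing Theorem \ref{thm:SKhisHH}, which is the $k=1$ case. The aim is to identify the sutured Khovanov chain complex of $m(\widehat{\sigma})$ in filtration grading $f = n-2k$ with a complex computing $HH(A^{k,n-k}, \cM_\sigma^k)$, adapting the correspondence between the geometric annular closure operation and the Hochschild trace used in the $k=1$ setting.

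The first step is to identify chain-level generators. The sutured Khovanov complex splits along the filtration grading, and grading $n-2k$ is spanned by resolutions of $\widehat{\sigma}$ in which $n-2k$ of the resulting loops wrap nontrivially around the braid axis while $k$ pairs of strands are matched off by trivially-nested arcs. Following \cite{Tangles, BrundanStroppel}, such nested-arc configurations are precisely the crossingless matchings indexing the primitive idempotents of $A^{k,n-k}$. Consequently, the chain groups in filtration grading $n-2k$ should acquire a natural $A^{k,n-k}$-bimodule structure, and the Khovanov tangle invariant of $\sigma$ should restrict along the inclusion $A^{k,n-k}\hookrightarrow H^n$ to produce $\cM_\sigma^k$; this identification is essentially the content of the categorification in \cite[Thm. 66]{FrenkelStroppelSussan}.

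With generators in hand, the next step is to match differentials. Geometrically, closing a braid in $A \times I$ corresponds algebraically to tensoring with the diagonal bimodule over $A^{k,n-k} \otimes (A^{k,n-k})^{\mathrm{op}}$, i.e., to taking Hochschild homology. One must then verify that Khovanov's surgery differential at each crossing resolution agrees with the bar differential on the Hochschild side, a check which forms the computational core of the argument in the $k=1$ case and should carry over once the generator correspondence is in place, provided care is taken with the $q$-grading shifts so that the two bigradings align as in the more precise version of Theorem \ref{thm:SKhisHH}.

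The main obstacle is the subquotient nature of $A^{k,n-k} \subset H^n$ for $k \geq 2$: one must verify that the ideals involved in passing from $H^n$ to $A^{k,n-k}$ are respected by the braid bimodule, and that no spurious contributions arise from the quotient. The $k=1$ case is simpler because $A_m$ admits an elementary quiver presentation and arises from $H^n$ without a delicate quotient. For general $k$, it seems likely that the geometric interpretation via $2$-block Springer fibers \cite{StroppelWebster}, or via the blocks $\mathcal{O}_{k,n-k}$ of category $\mathcal{O}$, where trace and cotrace functors are compatible with Hochschild homology, will provide the cleanest framework in which to carry out this verification.
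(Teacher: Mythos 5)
The statement you are addressing is stated in the paper as a \emph{conjecture}; the paper offers no proof of it, only the $k=1$ case (Theorem \ref{thm:SKhisHH}). Your text is likewise not a proof but a strategy outline, and it should be presented as such: every substantive step is hedged (``should acquire,'' ``should carry over,'' ``it seems likely''), and none is carried out. Concretely, three things that constitute the actual content of the $k=1$ argument are missing for general $k$. First, the generator correspondence: in the paper this is a case-by-case computation (Cases V1--V4 of Proposition \ref{prop:SKhiscoinvar}) identifying the coinvariant quotient of each cube vertex with the filtration-$(m-1)$ part of the Khovanov chain group, including the delicate grading shifts of Lemma \ref{lem:gr}; asserting that nested-arc configurations ``are precisely the crossingless matchings indexing the primitive idempotents'' does not produce the required bigraded isomorphism, nor the compatible edge maps (Cases E0--E1b). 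Second, and more seriously, you implicitly replace derived Hochschild homology by its degree-zero coinvariant quotient. For $k=1$ this replacement is justified by Proposition \ref{prop:HHiscoinvar}, which requires showing that the bimodules at the cube vertices are projective except for $A_m$ itself, and then computing $HH(A_m,A_m)$ via the directed algebra $B_m^{Kh}$. For $A^{k,n-k}$ with $k\geq 2$ no analogue of this vanishing is established, and without it the identification of $HH(A^{k,n-k},\cM_\sigma^k)$ with any chain-level quotient is unfounded.

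Third, your description of the algebraic setup is internally inconsistent: you first invoke a restriction ``along the inclusion $A^{k,n-k}\hookrightarrow H^n$'' and later correctly note that $A^{k,n-k}$ is a \emph{subquotient} of $H^n$, which is exactly why the restriction you invoke is not automatic. The paragraph flagging this as ``the main obstacle'' is accurate, but flagging an obstacle is not overcoming it. In short, your proposal reproduces the skeleton of the paper's $k=1$ argument at the level of headings, but every load-bearing verification is deferred, so the conjecture remains exactly as open after your argument as before it.
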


Establishing this conjecture may be useful for computational purposes, since modifying existing Khovanov homology programs to compute sutured Khovanov homology should be straightforward.

{\bf Acknowledgments:} We thank John Baldwin, Tony Licata, and Catharina Stroppel for a number of interesting conversations, as well as Robert Lipshitz for explaining how to obtain Proposition \ref{prop:HHbordsut}, generalizing \cite[Thm. 7]{BorderedBimodules}, from work of Rumen Zarev \cite{GT09081106}. We would also like to thank MSRI and the organizers of the semester-long program on Homology Theories of Knots and Links, during which a portion of this work was completed.

\section{Topological preliminaries}
As in \cite[Sec. 3.1]{QuiverAlgebras}, let $D_m \subset \C$ denote the standard unit disk equipped with a set, \[\Delta := \left\{-1 +\frac{2(j+1)}{m+2} \in D_m \subset \C \,\,\vline\,\, j= 0, \ldots, m\right\},\] of $m+1$ points equally spaced along the real axis.  $I = [0,1]$ is the closed, positively-oriented unit interval.

Recall that the $(m+1)$--strand  braid group, $\Braid_{m+1}$, is the set of equivalence classes of properly-imbedded smooth $1$--manifolds $\sigma \subset D_m \times I$ satisfying the properties:
\begin{enumerate}
	\item  $\partial \sigma = (\Delta \times \{0\}) \cup (\Delta \times \{1\})$,
	\item $|\sigma \cap D_m \times \{t\}| = m+1$ for all $t \in I$.
\end{enumerate}
Two such $1$--manifolds $\sigma$ and $\sigma'$ are considered {\em equivalent} ({\em braid isotopic}) if there exists a smooth isotopy from $\sigma$ to $\sigma'$ through $1$--manifolds satisfying Properties (1) and (2). 

$\Braid_{m+1}$ forms a group, with composition given by stacking (bottom to top) and vertical rescaling.  We shall make frequent use of Artin's well-known presentation:
\[\Braid_{m+1} := \left\langle \sigma_1, \ldots, \sigma_m \,\,\vline\,\, \begin{array}{cl} \sigma_i\sigma_j = \sigma_j\sigma_i & \mbox{if $|i-j| = 1$,}\\
\sigma_i\sigma_j\sigma_i = \sigma_j\sigma_i\sigma_j & \mbox{if $|i-j| \geq 2$.}\end{array}\right\rangle.\] Our convention will be that Artin words read left to right denote braids formed by stacking the elementary Artin generators bottom to top.

Associated to a factorization, $\sigma = \sigma_{i_1}^\pm \cdots \sigma_{i_k}^\pm$, of $\sigma \in \Braid_{m+1}$ as a product of elementary Artin generators, one constructs a diagram of $\sigma$ in $\R \times I$. Any crossing of such a diagram can be resolved in one of two ways as indicated in Figure \ref{fig:Resolutions}, and any complete resolution is an $(m+1)$--strand {\em Temperley-Lieb (TL) diagram}, i.e., a smooth, properly-imbedded $1$--manifold $U \subset \R \times I$ with $\partial U = (\Delta \times \{0\}) \cup (\Delta \times \{1\}).$ As above, TL diagrams $U$ and $U'$ are considered equivalent if $U$ is isotopic to $U'$ through TL diagrams, and there is a multiplicative structure on TL diagrams, obtained by stacking and vertically rescaling. The product diagram, $\mbox{Id} := \Delta \times I,$ is a two-sided identity with respect to this multiplication.  Any TL diagram is isotopic to a product of the elementary TL diagrams $U_1, \ldots, U_m$ pictured in Figure \ref{fig:TLgenerators}, and two {\em TL words} in these generators represent isotopic TL diagrams iff one can be obtained from the other by applying {\em Jones relations}:
\begin{eqnarray}
\label{eqn:trivcirc} U_i^2 &=& \bigcirc \amalg U_i \,\,\mbox{ for all $i \in \{1, \ldots, m\}$}\\
\label{eqn:kinkright}U_iU_{i+1}U_i &=& U_i \,\,\mbox{ for all $i \in \{1, \ldots, m-1\}$}\\
\label{eqn:kinkleft}U_iU_{i-1}U_i &=& U_i \,\,\mbox{ for all $i \in \{2, \ldots, m\}$}
\end{eqnarray}

The ``$\bigcirc$" of \eqref{eqn:trivcirc} represents a closed circle positioned between the cup and cap of $U_i$.

\begin{figure}
\begin{center}
\resizebox{1in}{!}{\input{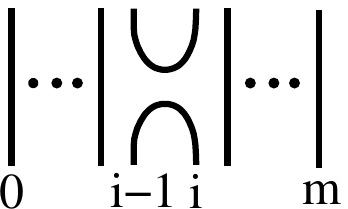_t}}
\end{center}
\caption{The elementary TL generator, $U_i$}
\label{fig:TLgenerators}
\end{figure}

\section{Algebraic Preliminaries}

We refer the reader to \cite{MR1854636}, \cite{MR2258042}, \cite{SeidelBook}, \cite{kenji}, \cite{GT08100687} for standard background on $A_\infty$ algebras and modules. Terminology and notation are as in \cite{QuiverAlgebras}. All algebras/modules we consider are over $\F := \Z/2\Z$.

\begin{notation} \label{not:grading}
Given a bigraded vector space  \[V = \bigoplus_{i,j \in \Z} V_{(i,j)},\] (e.g., a differential graded module) and $k_1, k_2 \in \Z$,  $V[k_1]\{k_2\}$ will denote the bigraded vector space whose first (homological) grading has been shifted {\em down} by $k_1$ and whose second (internal) grading has been shifted {\em up} by $k_2$.\footnote{The difference in shift conventions for homological versus internal gradings is unfortunate, but standard in the literature. In particular, they coincide with those in \cite{MR1862802}, to which we frequently refer.} Explicitly, \[\left(V[k_1]\{k_2\}\right)_{(i-k_1,j+k_2)} := V_{(i,j)}.\]
\end{notation}

\begin{notation} \label{not:gradingshift}
If $V$ is a bigraded vector space, $k \in \Z^{\geq 0}$, and $W := \F_{(0,-1)} \oplus \F_{(0,1)},$ then we will use the notation $\bigcirc_{k}V$ to denote the bigraded vector space $V \otimes W^{\otimes{k}}$. 
\end{notation}

\begin{notation} \label{not:derivedcat}
Let $A$ and $B$ be $A_\infty$ algebras. 
We will denote by $D_\infty(A)$ (resp., by $D_\infty(A^{op})$, $D_\infty(A-B^{op})$) the category whose objects are homologically unital $A_\infty$ {\em left--} (resp., {\em right--}, {\em bi-}) modules over $A$ (resp., over $A$, over $A$--$B$) and whose morphisms are $A_\infty$ homotopy classes of $A_\infty$ morphisms.
\end{notation}

\begin{definition} Let $A,B,$ and $C$ be $A_\infty$ algebras, $M \in D_\infty(A-B^{op})$, and $N \in D_\infty(B -C^{op}).$ The derived $A_\infty$ tensor product, $M \widetilde{\otimes}_B N$, is the well-defined object of $D_\infty(A - C^{op})$ represented by the $A_\infty$ bimodule over $A$--$C$ with underlying vector space \[\bigoplus_{k=0}^\infty M \otimes B^{\otimes k} \otimes N,\] whose differential ($m_{(0|1|0)}$ structure map) is given in terms of the structure maps $m^M$, $m^B$, and $m^N$ by 
\[m_{(0|1|0)}^{M \widetilde{\otimes}_B N} := \sum_{i_1\leq k} m_{(0|1|i_1)}^M \otimes \mbox{Id}^{\otimes k-i_1+1} + \sum_{i_2 \leq k} \mbox{Id}^{\otimes k-i_2+1} \otimes m_{(i_2|1|0)}^N\]
\[+\sum_{1 \leq j_1 < j_2 \leq k} \mbox{Id}^{\otimes j_1} \otimes m_{j_2-j_1}^B \otimes \mbox{Id}^{\otimes k-j_2 + 2}\] and whose higher ($i_1>0$ or $i_2>0$) $A_\infty$ structure maps,
\[m_{(i_1|1|i_2)}^{M \widetilde{\otimes}_B N} : A^{\otimes i_1} \otimes \left(M \otimes B^{\otimes k} \otimes N\right) \otimes C^{\otimes i_2} \rightarrow \bigoplus_{\substack{0 \leq j \leq k}} M \otimes B^{\otimes k-j} \otimes N,\] are given by 
\[m_{(i_1|1|i_2)}^{M \widetilde{\otimes}_B N} := \left\{\begin{array}{cl}
\sum_{0 \leq j \leq k} m^M_{(i_1|1|j)} \otimes \mbox{Id}^{\otimes k - j + 1} & \mbox{if $i_2 = 0$,}\\
\sum_{0 \leq j \leq k} \mbox{Id}^{\otimes k - j + 1} \otimes m^M_{(j|1|i_2)} & \mbox{if $i_1 = 0$,}\\
0 & \mbox{otherwise.}\end{array}\right.\]
\end{definition}

\begin{definition} \label{defn:HochHomAinfty} Let $A$ be an $A_\infty$ algebra and $M \in D_\infty(A - A^{op})$. The {\em Hochschild homology} of $A$ with coefficients in $M$, denoted $HH(A,M)$, is the derived $A_\infty$ self-tensor product of $M$. Explicitly, it is the homology of the chain complex with underlying vector space:
\[\bigoplus_{k=0}^m A^{\otimes k} \otimes M\] and differential \[\partial(a_1 \otimes \ldots \otimes a_k \otimes m) :=\]
\[ \sum_{\substack{0 \leq i_1, i_2\\ i_1+i_2 \leq k}} a_{i_2+1} \otimes \ldots \otimes a_{k-i_1} \otimes m_{(i_1|1|i_2)}^M(a_{k-i_1+1} \otimes \ldots \otimes a_k \otimes m \otimes a_1 \otimes \ldots \otimes a_{i_2})\]
\[+ \sum_{1 \leq j_1 < j_2 \leq k} a_1 \otimes \ldots \otimes m^A_{j_2-j_1+1}(a_{j_1} \otimes \ldots \otimes a_{j_2}) \otimes a_{j_2+1} \otimes \ldots \otimes a_k \otimes m\]
\end{definition}

When $A$ is an ordinary associative algebra and $M$ is a dg bimodule, the above definition agrees with the ``classical" definition of Hochschild homology:

\begin{definition} \label{defn:HochHomClassical}
Let $A$ be an associative algebra and $M$ a dg bimodule over $A$. Noting that $M$ (resp., $A$) can be viewed as a left (resp., right) module over $A^e := A \otimes A^{op}$, the {\em $n$th (classical) Hochschild homology } of $A$ with coefficients in $M$ is \[HH_n(A,M) := \mbox{Tor}_n^{A^e}(A,M).\]
\end{definition}

In particular, to compute $HH(A,M)$ one chooses a resolution \[\mathcal{R}(A) \rightarrow A\] of $A$ by projective right $A^e$ modules, and \[HH_n(M) := H_n(\cR(A) \otimes_{A^e} M).\] Standard arguments in homological algebra imply that the homology is independent of the chosen resolution. When $\cR(A)$ is the bar resolution, we obtain:

\[HH_n(A,M) := H_n(\xymatrix{\cdots \ar[r]^-{\partial_4} & A^{\otimes 3} \otimes M \ar[r]^-{\partial_3} & A^{\otimes 2} \otimes M \ar[r]^-{\partial_2} & A \otimes M \ar[r]^-{\partial_1} & M}),\] where 
\[\partial_i(a_1 \otimes \ldots \otimes a_i \otimes m) := (a_1 \otimes \ldots \otimes a_i \otimes \partial^Mm) +\]\[(a_1a_2 \otimes a_3 \otimes \ldots a_i \otimes m) + (a_1 \otimes a_2a_3 \otimes \ldots a_i \otimes m) + \ldots + (a_2 \otimes a_3 \otimes \ldots \otimes ma_1),\] and $\partial^M$ is the internal differential on $M$.

Since the higher structure maps of $A$ and $M$ are assumed to be zero, the complex described in Definition \ref{defn:HochHomAinfty} agrees with this one. Accordingly, we shall use the definitions interchangeably when discussing the Hochschild homology of an ordinary associative algebra with coefficients in a dg bimodule.

\begin{lemma} \label{lem:HHTensor} Let $A$ and $B$ be $A_\infty$ algebras. We have \[HH(A, M \widetilde{\otimes}_B N) \cong HH(B, N \widetilde{\otimes}_A M)\] for any $A_\infty$ $A$--$B$ bimodule $M$ and $A_\infty$ $B$--$A$ bimodule $N$.
\end{lemma}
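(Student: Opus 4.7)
The plan is to exhibit a manifest chain isomorphism between the two Hochschild complexes by unfolding each derived tensor product into a ``cyclic bar'' complex, then showing the two unfoldings are related by a cyclic rotation of tensor factors.

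First I would unpack the two complexes. By Definition~\ref{defn:HochHomAinfty} together with the definition of $\widetilde{\otimes}$, the complex computing $HH(A, M \widetilde{\otimes}_B N)$ has underlying vector space
\[
\bigoplus_{k \geq 0} A^{\otimes k} \otimes (M \widetilde{\otimes}_B N) \;=\; \bigoplus_{k,\ell \geq 0} A^{\otimes k} \otimes M \otimes B^{\otimes \ell} \otimes N,
\]
and symmetrically $HH(B, N \widetilde{\otimes}_A M)$ has underlying vector space $\bigoplus_{\ell,k} B^{\otimes \ell} \otimes N \otimes A^{\otimes k} \otimes M$. I would then define the cyclic shift
\[
\phi \colon a_1 \otimes \cdots \otimes a_k \otimes m \otimes b_1 \otimes \cdots \otimes b_\ell \otimes n \;\longmapsto\; b_1 \otimes \cdots \otimes b_\ell \otimes n \otimes a_1 \otimes \cdots \otimes a_k \otimes m,
\]
which is evidently a grading-preserving isomorphism of vector spaces.

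Next I would verify that $\phi$ intertwines the two differentials. Substituting the explicit formulas for $m^{M \widetilde{\otimes}_B N}_{(i_1|1|i_2)}$ into the Hochschild differential of Definition~\ref{defn:HochHomAinfty} produces exactly four types of terms on a generator $a_1 \otimes \cdots \otimes a_k \otimes m \otimes b_1 \otimes \cdots \otimes b_\ell \otimes n$: (i) $m^A_*$ applied to a consecutive substring of $a_i$'s, (ii) $m^B_*$ applied to a consecutive substring of $b_j$'s, (iii) $m^M_{(i_1|1|j)}$ applied to the cyclically consecutive substring $(a_{k-i_1+1},\ldots,a_k, m, b_1, \ldots, b_j)$ ``wrapping through'' $m$, and (iv) $m^N_{(j|1|i_2)}$ applied to $(b_{\ell-j+1},\ldots,b_\ell, n, a_1, \ldots, a_{i_2})$ wrapping through $n$. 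An identical unpacking of the differential on $HH(B, N \widetilde{\otimes}_A M)$ produces the same four types of terms, with the roles of $(A,M)$ and $(B,N)$ interchanged. Under $\phi$, type (i) matches type (i), type (ii) matches type (ii), and types (iii) and (iv) are swapped: the term on the $A$-side with parameters $(i_1, j)$ in (iii) corresponds to the term on the $B$-side with parameters $(j, i_1)$ in (iv), and similarly for (iv). No signs appear since we work over $\F = \Z/2\Z$.

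The main obstacle is purely combinatorial bookkeeping: one must carefully verify the parameter matching in the two wrap-around cases above, keeping track of which $A$-factors or $B$-factors get absorbed by $m^M$ or $m^N$ on each side. Alternatively, one could invoke the standard cyclic/trace property of $A_\infty$ Hochschild homology (see, e.g., Seidel's book or the discussion in \cite{GT08100687}), which expresses $HH(A, P)$ for a bimodule $P$ as the ``trace'' of $P$ in the bimodule $2$-category and from which the symmetry $HH(A, M \widetilde{\otimes}_B N) \cong HH(B, N \widetilde{\otimes}_A M)$ is immediate. I expect the direct bookkeeping proof to be cleaner in this setting, as it requires no additional abstract machinery beyond Definition~\ref{defn:HochHomAinfty}.
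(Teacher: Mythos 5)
Your proposal is correct and follows essentially the same route as the paper: the paper's proof consists precisely of the cyclic identification $(a_1 \otimes \cdots \otimes a_k) \otimes m \otimes (b_1 \otimes \cdots \otimes b_\ell) \otimes n \leftrightarrow (b_1 \otimes \cdots \otimes b_\ell) \otimes n \otimes (a_1 \otimes \cdots \otimes a_k) \otimes m$, asserted to be a chain isomorphism of the two Hochschild complexes from Definition~\ref{defn:HochHomAinfty}. Your unpacking of the four term types and the swap of the two ``wrap-around'' cases is exactly the bookkeeping the paper leaves implicit.
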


\begin{proof} The Hochschild complexes described in Definition \ref{defn:HochHomAinfty} are chain isomorphic, via the ($\F$--linear extension of) the canonical identification \[(a_1 \otimes \ldots \otimes a_k) \otimes m \otimes (b_1 \otimes \ldots \otimes b_{\ell}) \otimes n \longleftrightarrow (b_1 \otimes \ldots \otimes b_\ell) \otimes n \otimes (a_1 \otimes \ldots \otimes a_k) \otimes m.\]
\end{proof}

\begin{corollary} \label{cor:Morita} Let $A$ and $B$ be $A_\infty$ algebras. Suppose that there exists an $A_\infty$ $A$--$B$ bimodule ${}_AP_B$ and an $A_\infty$ $B$--$A$ bimodule ${}_BP_A$ satisfying the property that 
\begin{eqnarray*}
	{}_AP_B \,\,\widetilde{\otimes}_B\,\, {}_BP_A \cong A &\in & D_\infty (A-A^{op}).
\end{eqnarray*} 
Then for any $M \in D_\infty(A-A^{op})$, we have \[HH(A,M) \cong HH(B,{}_BP_A \,\,\widetilde{\otimes}_A \,\, M \,\,\widetilde{\otimes}_A \,\,{}_AP_B).\]
\end{corollary}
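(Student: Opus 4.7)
The plan is to reduce the statement to a direct application of Lemma \ref{lem:HHTensor} after a suitable rewriting of the coefficient bimodule $M$. The key observation is that, although $M$ is a bimodule over $A$--$A$, the hypothesis ${}_AP_B\,\widetilde{\otimes}_B\,{}_BP_A\cong A$ in $D_\infty(A-A^{op})$ lets us factor $M$ (up to $A_\infty$ homotopy equivalence of $A$--$A$ bimodules) as a tensor product through $B$, namely
\[
M \;\cong\; M \,\widetilde{\otimes}_A\, A \;\cong\; M \,\widetilde{\otimes}_A \,\bigl({}_AP_B\,\widetilde{\otimes}_B\,{}_BP_A\bigr) \;\cong\; \bigl(M \,\widetilde{\otimes}_A\, {}_AP_B\bigr) \,\widetilde{\otimes}_B \,{}_BP_A.
\]
First, I would verify that each step is an equivalence in the appropriate derived category: the unit isomorphism $M\,\widetilde{\otimes}_A\,A\cong M$ in $D_\infty(A-A^{op})$ is standard for homologically unital $A_\infty$ bimodules, the middle step is the hypothesized equivalence tensored on the left with $M$, and the last step is associativity of the derived $A_\infty$ tensor product. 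All three facts are standard consequences of the references on $A_\infty$ algebras cited at the start of Section 3.

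Having written $M\cong X\,\widetilde{\otimes}_B\,Y$ with $X := M\,\widetilde{\otimes}_A\,{}_AP_B$ an $A$--$B$ bimodule and $Y := {}_BP_A$ a $B$--$A$ bimodule, the next step is to apply Lemma \ref{lem:HHTensor}. Since Hochschild homology is a well-defined invariant on $D_\infty(A-A^{op})$, the equivalence above gives $HH(A,M)\cong HH(A, X\,\widetilde{\otimes}_B\,Y)$, and Lemma \ref{lem:HHTensor} then yields
\[
HH(A,M) \;\cong\; HH\bigl(B,\, Y\,\widetilde{\otimes}_A\, X\bigr) \;=\; HH\bigl(B,\, {}_BP_A\,\widetilde{\otimes}_A\, M\,\widetilde{\otimes}_A\,{}_AP_B\bigr),
\]
which is the desired conclusion.

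The main obstacle, if any, is purely bookkeeping: one must be comfortable that the derived tensor product $\widetilde{\otimes}$ is associative and unital in $D_\infty$, and that $HH(-,-)$ descends to this homotopy category so that the equivalence $M\cong X\,\widetilde{\otimes}_B\,Y$ passes through. Once these formal properties are in hand, the argument is a one-line manipulation and no further computation is required. In particular, no direct analysis of the Hochschild differential is needed beyond what is already used to establish Lemma \ref{lem:HHTensor}.
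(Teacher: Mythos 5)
Your argument is correct and is essentially the paper's own proof read in the opposite direction: the paper starts from $HH(B,\,{}_BP_A\,\widetilde{\otimes}_A\,M\,\widetilde{\otimes}_A\,{}_AP_B)$, applies Lemma \ref{lem:HHTensor} once, and then uses ${}_AP_B\,\widetilde{\otimes}_B\,{}_BP_A\cong A$ together with associativity and unitality of $\widetilde{\otimes}$ to arrive at $HH(A,M)$, exactly the chain of isomorphisms you write down. The formal properties you flag (associativity, unitality, invariance of $HH$ under equivalence in $D_\infty(A-A^{op})$) are likewise taken for granted in the paper, so no gap.
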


\begin{proof}
By Lemma \ref{lem:HHTensor} and the equivalence ${}_AP_B \,\,\widetilde{\otimes}_B \,\,{}_BP_A \cong A$, we have 
\begin{eqnarray*}
	HH(B, {}_BP_A \,\,\widetilde{\otimes}_A \,\, M \,\,\widetilde{\otimes}_A \,\,{}_AP_B) &\cong& HH(A, M \,\,\widetilde{\otimes}_A \,\,{}_AP_B \,\,\widetilde{\otimes}_B \,\,{}_BP_A)\\
	&\cong& HH(A,M),
\end{eqnarray*}
as desired.
\end{proof}

We will also make use of the following non-derived version of a self-tensor product for dg bimodules over an associative algebra.

\begin{definition} \label{defn:coinvariant}
Let $A$ be an associative algebra (i.e., a dg algebra supported in a single homological grading) and let \[\cM = \left(\xymatrix{\cdots \ar[r] & \cM_{n} \ar[r]^-{\partial^{n}} & \cM_{n+1} \ar[r] & \cdots}\right)\] be a dg $A$--bimodule. 

Then the {\em coinvariant quotient complex} of $\cM$ is the complex:
\[\cQ(\cM) :=  \left(\xymatrix{\cdots \ar[r] & \cQ(\cM_{n}) \ar[r]^-{\cQ(\partial^{n})} & \cQ(\cM_{n+1}) \ar[r] & \cdots}\right),\]
where \[\cQ(\cM_{n}) := \frac{\cM_{n}}{\langle am - ma\,\,|\,\, a \in A, m \in \cM_{n}\rangle},\] and $\cQ(\partial^n)$ is the induced differential on the quotient (well-defined, since $\partial^n$ is $A$--linear). 
\end{definition}

We have the following analogue of Lemma \ref{lem:HHTensor}:

\begin{lemma} \label{lem:QCyclic}
Let $A, B$ be associative algebras, and let $\cM$ (resp., $\cN$) be dg bimodules over $A-B$ (resp., over $B-A$). The canonical $\F$--linear map \[\Psi: \cQ(\cM \otimes_{B} \cN) \rightarrow \cQ(\cN \otimes_{A} \cM)\] sending $[m \otimes n] \in \cQ(\cM \otimes_{A} \cN)$ to $[n \otimes m] \in \cQ(\cN \otimes_B \cM)$ is a chain isomorphism.
\end{lemma}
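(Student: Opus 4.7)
The plan is to show that $\Psi$ is the symbol-swap $m \otimes n \mapsto n \otimes m$ descended to the coinvariant quotients, and to verify in order: (a) that this swap respects the defining relations on both sides, so $\Psi$ is well-defined; (b) that it commutes with the differentials; and (c) that the analogous swap in the opposite direction is a two-sided inverse. Since we are working over an associative algebra in each slot (rather than in the full $A_\infty$ setting), no higher structure maps enter, and the argument is in essence the degree--zero shadow of Lemma \ref{lem:HHTensor}.

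The key step is well-definedness. Starting from the tautological $\F$--linear map $\widetilde\Psi : \cM \otimes_\F \cN \to \cN \otimes_\F \cM$ sending $m \otimes n \mapsto n \otimes m$, I would observe that the relations defining the domain $\cQ(\cM \otimes_B \cN)$ split naturally into two types. The first is the $B$--balancing relation $mb \otimes n = m \otimes bn$, built into the definition of $\otimes_B$; the second is the $A$--coinvariance relation $am \otimes n \sim m \otimes na$, imposed by $\cQ$ (recall that $\cM \otimes_B \cN$ is naturally an $A$--bimodule via the outer $A$--actions). Under $\widetilde\Psi$, a $B$--balancing relation is sent to the pair $n \otimes mb$ and $bn \otimes m$, which is precisely the $B$--coinvariance relation imposed by $\cQ$ on the $B$--bimodule $\cN \otimes_A \cM$; dually, an $A$--coinvariance relation is sent to $n \otimes am$ and $na \otimes m$, which is already equal in $\cN \otimes_A \cM$ by $A$--balancing. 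Hence $\widetilde\Psi$ descends to a well-defined $\F$--linear map $\Psi$ on the coinvariant quotients.

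The chain-map property is immediate: both complexes carry the standard tensor-product differential $\partial(m \otimes n) = (\partial^{\cM} m) \otimes n + m \otimes (\partial^{\cN} n)$, which the symbol swap intertwines (no Koszul signs appear because everything is over $\F = \Z/2\Z$). For the isomorphism, the analogous map $\Psi' : \cQ(\cN \otimes_A \cM) \to \cQ(\cM \otimes_B \cN)$ defined by $[n \otimes m] \mapsto [m \otimes n]$ is well-defined by the same bookkeeping with the roles of $A$ and $B$, and of $\cM$ and $\cN$, interchanged; since $\Psi \circ \Psi'$ and $\Psi' \circ \Psi$ fix generators, both are the identity.

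I do not expect a genuine obstacle. The entire proof is a bookkeeping exercise, and the only subtle point is keeping straight which tensor-product constraint is automatic (the balancing) and which is imposed (the coinvariance) on each side. The content of the lemma is precisely that swapping $m \otimes n \leftrightarrow n \otimes m$ exchanges the pair ``$B$--balancing and $A$--coinvariance'' with the pair ``$A$--balancing and $B$--coinvariance,'' so that the two sets of constraints match up consistently.
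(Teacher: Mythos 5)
Your proof is correct and follows essentially the same route as the paper's: the two displayed verifications in the paper are exactly your observation that the swap exchanges the $B$--balancing relation with the $B$--coinvariance relation and the $A$--coinvariance relation with the $A$--balancing relation, after which the chain-map and inverse checks are routine. Your write-up just makes the bookkeeping more explicit than the paper does.
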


\begin{proof}
$\Psi$ is well-defined, since for any $a \in A, b \in B, m \in \cM, n \in \cN$ we have
\[\Psi[mb \otimes n] = [n \otimes mb] = [bn \otimes m] = \Psi[m \otimes bn]\]
and
\[\Psi[am \otimes n] = [n \otimes am] = [na \otimes m] = \Psi[m \otimes na].\] Verifying that $\Psi$ is a chain map (with canonical inverse)
is similarly routine.
\end{proof}

\section{Sutured annular Khovanov homology and Khovanov-Seidel bimodules}

We begin by reviewing the definition of sutured annular Khovanov homology in \cite{MR2113902} (see also \cite{GT07060741}, \cite{AnnularLinks}) as well as the construction of Khovanov-Seidel in  \cite{MR1862802}.

\subsection{Sutured annular Khovanov homology} \label{sec:SKh}
Sutured annular Khovanov homology is an invariant of links in a solid torus (equipped with a fixed identification as a thickened annulus), defined as follows.

Let $A$ denote an oriented annulus and $I = [0,1]$ the closed, positively oriented, unit interval. Any link $L \subset A \times I$ admits a diagram on $A \times \{\frac{1}{2}\}$, which can equivalently be viewed as a diagram on $S^2 - \{O,X\},$ where $O,X$ are two distinguished points (the north and south poles, e.g.). See Figure \ref{fig:AnnularDiag}.

\begin{figure}
\begin{center}
\resizebox{4in}{!}{\input{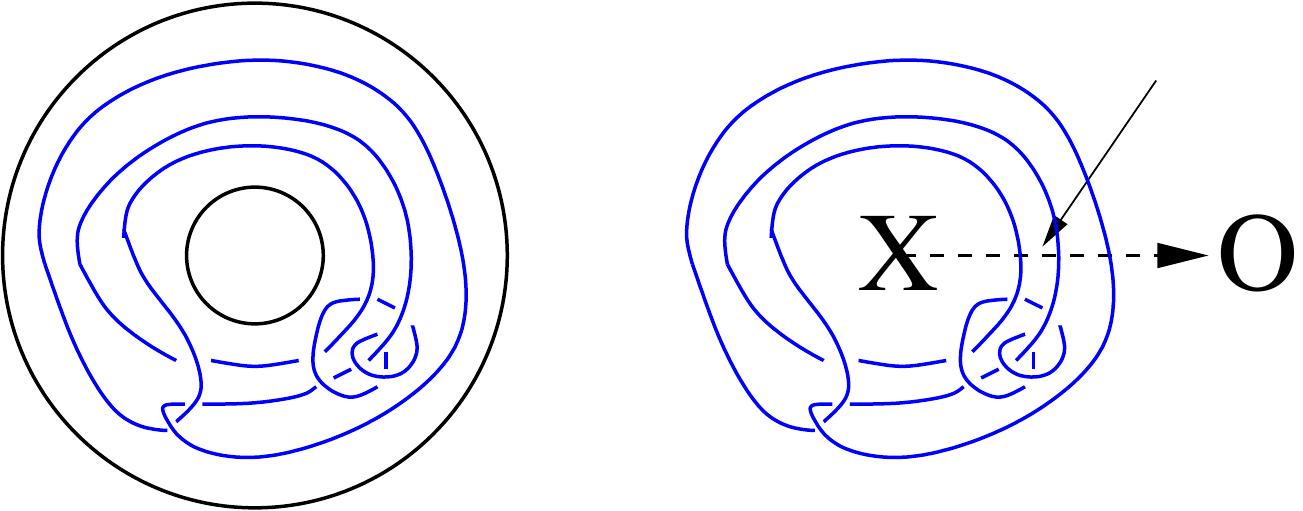_t}}
\end{center}
\caption{Pictured above are two diagrams of an annular link, $L \subset A \times I$, one on $A \times \left\{\frac{1}{2}\right\}$ (left) and one on $S^2$ equipped with two distinguished points,  $O$ and $X$ (right). The oriented arc, $\gamma_0$, specifies the ``$f$" (filtration) grading on the Khovanov complex associated to the annular link.}
\label{fig:AnnularDiag}
\end{figure}

By forgetting the data of the $X$ basepoint, one obtains a diagram of $L$ on $D := S^2 - N(O)$. At this point, one can construct the (bigraded) Khovanov complex as normal by 
ordering the $k$ crossings of the diagram of $L$ and assigning to the diagram its $k$--dimensional {\em cube of resolutions}, $[-1,1]^k$, whose vertices, $\vec{v} = (v_1, \ldots, v_k) \in \{-1,1\}^k$, correspond to complete resolutions as in Figure \ref{fig:Resolutions}.

\begin{figure}
\begin{center}
\resizebox{4in}{!}{\input{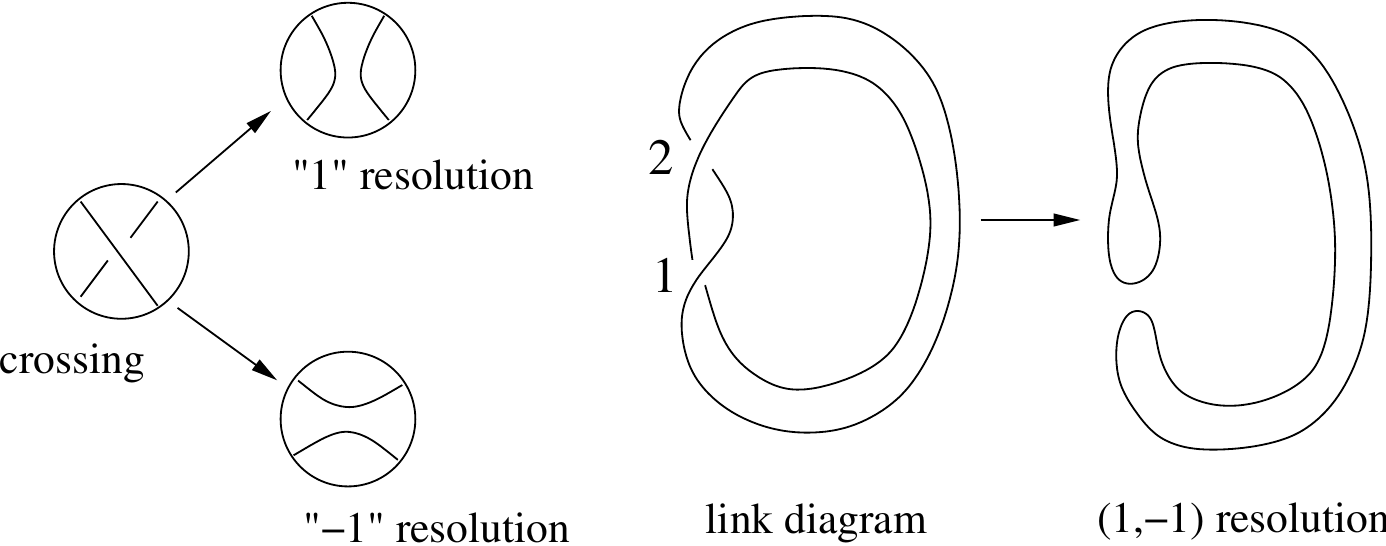_t}}
\end{center}
\caption{If the $k$ crossings of a link diagram have been ordered, we can associate to each vertex, $(v_1, \ldots, v_k) \in \{-1,1\}^k$, of the $k$--cube a complete resolution of the diagram as indicated.}
\label{fig:Resolutions}
\end{figure}

If we now remember the data of the $X$ basepoint, we may choose an oriented arc, $\gamma_0$, from $X$ to $O$ missing all crossings of the diagram. As described in \cite[Sec. 4.2]{JacoFest}, the generators of the Khovanov complex are in $1:1$ correspondence with {\em enhanced} (i.e., {\em oriented}) resolutions, so from the data of the oriented arc we obtain an extra ``$f$" (filtration) grading on the complex from the algebraic intersection number of the oriented arc with the oriented resolution. Roberts proves (\cite[Lem. 1]{GT07060741}), that the Khovanov differential is non-increasing in this extra grading, so one obtains a filtration of the Khovanov complex, $\CKh(L)$:
\[ 0 \subseteq \ldots \subseteq \cF_{n-1} \subseteq \cF_{n} \subseteq \cF_{n+1} \subseteq \ldots \subseteq \CKh(L),\] whose $n$th subcomplex is given by:
\[\mathcal{F}_n := \oplus_{f \leq n} \CKh(L; f).\] The sutured annular Khovanov homology of $L$ is then defined to be the homology of the associated graded complex:
\[\SKh(L) := \bigoplus_{n \in \Z} \SKh(L; n) := \bigoplus_{n \in \Z} H_*\left(\frac{\mathcal{F}_n}{\mathcal{F}_{n-1}}\right).\]

A concrete description of the sutured annular chain complex is given in \cite[Sec. 2]{GT07060741}. See also \cite{AnnularLinks} and \cite{JacoFest}.

In the present work, we will be interested in the special case when $L$ is the {\em annular braid closure}, $\widehat{\sigma}$, of a braid, $\sigma$. Precisely, if $\sigma \subset D_m \times I$ is a braid, then we obtain $\widehat{\sigma} \subset A \times I$, by gluing $D_m \times \{0\}$ to $D_m \times \{1\}$ by the identity diffeomorphism and choosing the identification $D_m \times S^1 \approx A \times I$ that on each radial slice agrees with the homeomorphism of Figure \ref{fig:Homeo}. 

\begin{figure}
\begin{center}
\resizebox{1in}{!}{\input{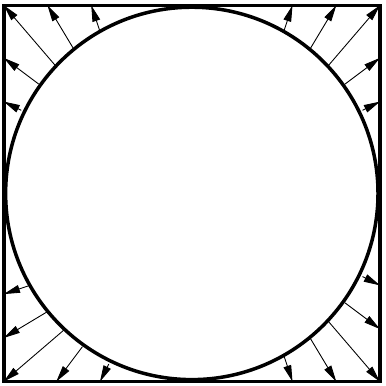_t}}
\end{center}
\caption{A radial slice of the homeomorphism $D_m \times S^1 \approx A \times I$}
\label{fig:Homeo}
\end{figure}

Note that if we view $S^3$ as $\R^3 \cup \infty$, there is a standard imbedding of the annular closure of $\sigma$ as a link in the complement of the braid axis, \[K_B := \{(r,\theta,z)\,\,|\,\,r=0\} \cup \infty \subset S^3,\] via the homeomorphism identifying $A \times I$ with $S^3 - N(K_B)$:
\[ A \times I = \{(r,\theta,z)\,\,\vline\,\,r \in [1,2], \theta \in [0, 2\pi), z \in [0,1]\} \subset S^3.\]

In preparation for establishing a correspondence with the Khovanov-Seidel bimodules described in the next section, we now give an explicit description of the sutured annular chain complex of a factorized annular braid closure. Order the $k$ crossings of $\sigma = \sigma_{i_1}^\pm \cdots \sigma_{i_k}^\pm$ from bottom to top, as read from left to right (where $\sigma_i^\pm$ is as pictured in \ref{fig:Braidgen}), and assign the same ordering to the crossings of the annular diagram of $\widehat{\sigma} \subset A \times I$. Endow $\widehat{\sigma}$ with the braid orientation.

Now consider the $k$--dimensional cube, $[-1,1]^k$, whose vertices are indexed by $k$--tuples $\vec{v} = (v_1, \ldots, v_k) \in \{-1,1\}^k$. Letting $\epsilon_j(\sigma_{i_1}^\pm \cdots \sigma_{i_k}^\pm) \in \{-1,1\}$ denote the exponent on the $j$th term in the product, we can now identify the vertex $\vec{v}$ of the $k$--cube with the annular closure of the TL diagram, $N_{i_1} \cdots N_{i_k}$, where:
\begin{equation}\label{eqn:Khorientconv}
N_{i_j} := \left\{\begin{array}{cl}
U_{i_j} & \mbox{if $v_j\epsilon_j = 1$, and}\\
\mbox{Id} & \mbox{if $v_j\epsilon_j = -1$,}
\end{array}\right.
\end{equation}

Moreover, if there is a directed edge from a vertex $\vec{v}$ to a vertex $\vec{w}$ in the $k$--cube then $\vec{v},\vec{w}$ agree except at a single entry at (say) the $j$--th position, where $v_j=-1$ and $w_j = +1$. The map assigned to this edge is one of the split/merge maps described in \cite[Sec. 2]{GT07060741}, depending on the topological configuration of the splitting/merging circles.

Let $n_+$ (resp., $n_-$) denote the number of positive (resp., negative) crossings of $\widehat{\sigma}$. The (h)omological, (q)uantum, and (f)iltration gradings of a sutured annular Khovanov generator, ${\bf x}$, is given in terms of its associated oriented resolution (enhanced Kauffman state), $\mathbb{S}({\bf x})$, by:
\begin{eqnarray*}
	h({\bf x}) &=& -n_- + \frac{1}{2}\sum_{i=1}^k (1 + v_i({\bf x}))\\
	q({\bf x}) &=& (n_+ - 2n_-) + \mbox{CCW}(\mathbb{S}({\bf x})) - \mbox{CW}(\mathbb{S}({\bf x})) + h({\bf x})\\
	f ({\bf x}) &=& [\gamma_0]\cdot [{\mathbb{S}(\bf x})],
\end{eqnarray*}
where ``$\cdot$" denotes algebraic intersection number, and $\mbox{CCW}(\mathbb{S}({\bf x}))$ (resp., $\mbox{CW}(\mathbb{S}({\bf x}))$) denotes the number of components of $\mathbb{S}({\bf x})$ oriented counter-clockwise (resp., clockwise).

Note that if $\sigma \in \Braid_{m+1}$, then $\SKh(\widehat{\sigma};k) = 0$ for $k \not\in \{-(m+1), -(m-1), \ldots, m-1, m+1\}$ (cf. \cite[Prop. 7.1]{QuiverAlgebras}), so we will sometimes refer to $\SKh(\widehat{\sigma};m-1)$ as the {\em next-to-top} filtration level of $\SKh(\widehat{\sigma})$. 
  
\subsection{Khovanov-Seidel bimodules} \label{sec:KhS}
Let $\Gamma_m$ be the oriented graph (quiver) with 
\begin{itemize}
	\item vertices labeled $0, \ldots, m$ and 
	\item a pair of oppositely--oriented edges connecting each pair of adjacent vertices, as in Figure \ref{fig:QuiverDm}.
\end{itemize}

\begin{figure}
\begin{center}
\resizebox{1.5in}{!}{\input{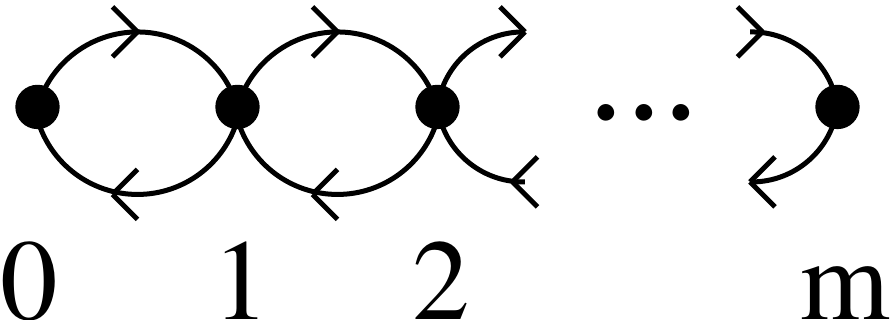_t}}
\end{center}
\caption{The oriented graph $\Gamma_m$ used in the definition of the algebra $A_m$}
\label{fig:QuiverDm}
\end{figure}

Recall that, given any oriented graph $\Gamma$, one defines its path algebra as the algebra whose underlying vector space is freely generated by the set of all finite-length paths in $\Gamma$, and multiplication is given by concatenation (the product of two non-composable paths is $0$).  

The algebra $A_m$ is then defined as a quotient of the path algebra of $\Gamma_m$ by the collection of relations \[(i-1|i|i+1) = (i+1|i|i-1)=0, \hskip 10pt (i|i+1|i)=(i|i-1|i), \hskip 10pt (0|1|0)=0\] for each $1 \leq i \leq m-1$.  In the above, following  \cite{MR1862802}, we have labeled each path in $\Gamma_m$ by the complete ordered tuple of vertices it traverses.  So, for instance, $(i|i+1|i)$ denotes the path that starts at vertex $i$, moves right to $i+1$, then returns to $i$.  

The path algebra of $\Gamma_m$ is further endowed with an internal grading by {\em negative} path length. As the above relations are homogenous with respect to this grading, it descends to the quotient, $A_m$.\footnote{The reader should be warned that we are using a different internal grading than the one considered in \cite{MR1862802} and \cite{QuiverAlgebras}. It is this {\em negative path length} grading and not Khovanov-Seidel's {\em steps-to-the-left} grading that is most directly related to Khovanov homology. Note also that $A_m$ is Koszul with respect to the {\em positive} path length grading, cf. \cite{StroppelAlgebra}.}

The collection, $\{(i)|i \in 0,\ldots,m\}$, of constant paths are mutually orthogonal idempotents whose sum,  $1 = \sum_{i=0}^m (i)$, is the identity in $A_m$.  There are corresponding decompositions of $A_m$ as a direct sum of projective left-modules $A_m = \bigoplus_{i=0}^m A_m(i)$ (resp., projective right-modules $A_m = \bigoplus_{i=0}^m (i)A_m$).  As in  \cite{MR1862802}, we denote $A_m(i)$ (resp., $(i)A_m$) by $P_i$ (resp., ${}_i{P}$).  Note that $P_i$ (resp., ${}_i{P}$) is the set of all paths ending at $i$ (resp., beginning at $i$).

Khovanov-Seidel go on to construct a braid group action on $D^b(A_m)$, the bounded derived category of left $A_m$--modules, by associating to each elementary braid word $\sigma_i^{\pm 1}$ (pictured in Figure \ref{fig:Braidgen}) a differential bimodule $\mathcal{M}_{{\sigma_i^\pm}}$ and to each braid, $\sigma := {\sigma_{i_1}}^\pm \cdots {\sigma_{i_k}}^\pm$, decomposed as a product of elementary braid words, the differential bimodule \[\mathcal{M}_\sigma = \mathcal{M}_{\sigma_{i_1}^\pm} \otimes_{A_m} \ldots \otimes_{A_m} \mathcal{M}_{\sigma_{i_k}^\pm}.\]

\begin{figure}
\begin{center}
\resizebox{2in}{!}{\input{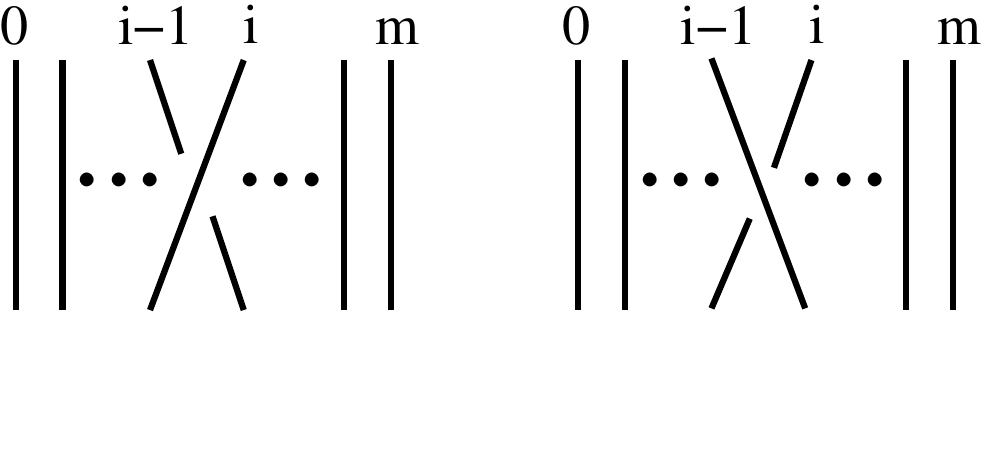_t}}
\end{center}
\caption{The elementary Artin braid group generators}
\label{fig:Braidgen}
\end{figure}

Specifically, they associate to $\sigma_i^+$ the differential bimodule \[\mathcal{M}_{\sigma_i^+} := \xymatrix{P_i \otimes {}_i{P}\{1\}  \ar[r]^-{\beta_i} & A_m\{1\}},\] where $\beta_i$ is the $A_m$--bimodule map induced by the assignment $\beta_i((i)\otimes(i)) = (i)$, and to $\sigma_i^{-}$ the differential bimodule \[\mathcal{M}_{\sigma_i^-} := \xymatrix{A_m \ar[r]^-{\gamma_i} & P_i \otimes {}_i{P}\{2\}},\] where \[\gamma_i(1) = (i-1|i) \otimes (i|i-1) + (i+1|i)\otimes (i|i+1) + (i) \otimes (i|i-1|i) + (i|i-1|i) \otimes (i).\]

Suppose now that we begin with $\sigma \in \Braid_{m+1}$, along with a decomposition $\sigma = \sigma_{i_1}^\pm \cdots \sigma_{i_k}^\pm$. Noting that the Khovanov-Seidel braid module $\mathcal{M}_\sigma$ is a tensor product (over $A_m$) of two-term complexes, each of which is a mapping cone of an $A_m$--module map between the bimodule ($P_{i} \otimes {}_{i}P$) assigned to the $i$--th elementary Temperley-Lieb (TL) object and the bimodule ($A_m$) assigned to the trivial TL object, we have a ``cube of resolutions" description of $\mathcal{M}_\sigma$ in terms of the given decomposition of $\sigma$ as follows which is similar in structure to the cube of resolutions description of the sutured annular Khovanov homology of $\widehat{\sigma} \subset A \times I$.

Order the $k$ crossings of the braid diagram $\sigma = \sigma_{i_1}^\pm \cdots \sigma_{i_k}^\pm$ in the direction of the braid orientation (by convention, from bottom to top as read from left to right). Now consider the $k$--dimensional cube, $[-1,1]^k$, whose vertices are indexed by $k$--tuples $\vec{v} = (v_1, \ldots, v_k) \in \{-1,1\}^k$. Letting $\epsilon_j(\sigma_{i_1}^\pm \cdots \sigma_{i_k}^\pm) \in \{-1,1\}$ denote the exponent on the $j$th term in the product, we can now (modulo bigrading shifts, specified in Lemma \ref{lem:gr}) identify the vertex $\vec{v}$ of the $k$--cube with the term, $\mathcal{N}_{i_1} \otimes_{A_m} \ldots \otimes_{A_m} \mathcal{N}_{i_k}$, of the complex $\mathcal{M}_\sigma$, where:
\begin{equation} \label{eqn:KhSorientconv} \mathcal{N}_{i_j} := \left\{\begin{array}{cl}
A_m & \mbox{if $v_j\epsilon_j = 1$, and}\\
P_{i_j} \otimes {}_{i_j}P & \mbox{if $v_j\epsilon_j = -1$,}
\end{array}\right.
\end{equation}

Moreover, if there is a directed edge from a vertex $\vec{v}$ to a vertex $\vec{w}$ in the $k$--cube then $\vec{v},\vec{w}$ agree except at a single entry at (say) the $j$--th position. The map assigned to this edge is then \[\mbox{Id} \otimes \ldots \otimes \left\{\begin{array}{c} \beta_{i_j}\\ \gamma_{i_j}\end{array}\right\} \otimes \ldots \otimes \mbox{Id}\] according to whether $\epsilon_j = \left\{\begin{array}{c} +1\\-1\end{array}\right\}.$  

As the homological and quantum grading shifts of generators at vertices in this ``cube of resolutions" complex for $\cM_{\sigma} = \cM_{\sigma_{i_1}^\pm \cdots \sigma_{i_k}^\pm}$ are somewhat cumbersome, we take a moment to record them here.

\begin{lemma} \label{lem:gr}
The bigraded Khovanov-Seidel bimodule associated to the vertex $\vec{v} \in \{-1,1\}^k$ in the above cube of resolutions is \[\left(\cN_{i_1} \otimes_{A_m} \ldots \otimes_{A_m} \cN_{i_k}\right)[-\vec{v}_h]\{\vec{v}_q\},\] where
\[\vec{v}_h := \sum_{j=1}^k \frac{1}{2}(v_j + 1),\] and
\[\vec{v}_q = \vec{v}_h + \sum_{j=1}^k \frac{1}{2}(1-v_j\epsilon_j).\]
\end{lemma}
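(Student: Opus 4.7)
The plan is a direct additive computation. The complex $\cM_\sigma = \cM_{\sigma_{i_1}^{\pm}} \otimes_{A_m} \cdots \otimes_{A_m} \cM_{\sigma_{i_k}^{\pm}}$ is an iterated tensor product of the two one-crossing cubes, so its cube of resolutions is literally the product of $k$ one-dimensional cubes. Since the bigrading shifts $[\,\cdot\,]$ and $\{\,\cdot\,\}$ are additive under $\otimes_{A_m}$, it suffices to establish the formulas one tensor factor at a time and then sum.

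The first step would be to read off, directly from the definitions of $\cM_{\sigma_i^{+}}$ and $\cM_{\sigma_i^{-}}$ recorded above, the shift carried by each vertex of the corresponding one-crossing cube. Organizing by the four cases $(\epsilon, v) \in \{-1,+1\}^2$: for $\epsilon = +1$ the vertex $v = -1$ carries $P_i \otimes {}_iP\,[0]\{1\}$ and the vertex $v = +1$ carries $A_m\,[-1]\{1\}$; for $\epsilon = -1$ the vertex $v = -1$ carries $A_m\,[0]\{0\}$ and the vertex $v = +1$ carries $P_i \otimes {}_iP\,[-1]\{2\}$. In all four cases I would then verify the two identities
\[
\text{(local $h$-shift)} = -\tfrac{1}{2}(v+1), \qquad \text{(local $q$-shift)} = \tfrac{1}{2}(v+1) + \tfrac{1}{2}(1 - v\epsilon),
\]
which match the summands in $\vec{v}_h$ and $\vec{v}_q$.

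The second step is simply to take the $k$-fold tensor product. Additivity of shifts under $\otimes_{A_m}$ converts the per-factor identities above into
\[
\vec{v}_h = \sum_{j=1}^k \tfrac{1}{2}(v_j + 1), \qquad \vec{v}_q = \vec{v}_h + \sum_{j=1}^k \tfrac{1}{2}(1 - v_j \epsilon_j),
\]
and the bimodule at the vertex $\vec{v}$ is therefore $(\cN_{i_1} \otimes_{A_m} \cdots \otimes_{A_m} \cN_{i_k})[-\vec{v}_h]\{\vec{v}_q\}$, as claimed.

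The main obstacle is purely conventional: per Notation~\ref{not:grading}, $[k_1]$ shifts homological grading \emph{down} by $k_1$, so a vertex that sits in homological depth $\vec{v}_h$ within the cube corresponds to applying $[-\vec{v}_h]$ to the underlying (unshifted) bimodule. Once this sign convention is fixed and the four single-crossing cases are tabulated, no further differential or $A_\infty$ computation is required; the lemma reduces to the additive bookkeeping above.
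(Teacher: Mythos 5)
Your proposal is correct and follows essentially the same route as the paper: the paper's proof is likewise a direct bookkeeping computation, counting the homological degree as the number of $+1$ entries of $\vec{v}$ and the quantum shift as $+1$ per positive crossing plus an extra $+2$ for each negative crossing resolved at $v_j=+1$, which is exactly your four-case table summed over $j$. Your per-factor tabulation and the additivity of shifts under $\otimes_{A_m}$ reproduce the paper's formula $\vec{v}_q=\sum_j \tfrac{1}{2}(\epsilon_j+1)-\tfrac{1}{2}(\epsilon_j-1)(v_j+1)=\vec{v}_h+\sum_j\tfrac{1}{2}(1-v_j\epsilon_j)$ verbatim.
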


\begin{proof} By definition, $\vec{v}_h$ is the number of $1$'s in the $k$--tuple $\vec{v} \in \{-1,1\}^k$, so the first statement follows.

To compute $\vec{v}_q$, note that we get an overall $+1$ shift for every $j \in \{1, \ldots, n\}$ satisfying $\epsilon_j = 1$ and an extra $+2$ shift for each $j \in \{1, \ldots, k\}$ satisfying $\epsilon_j = -1$ and $v_j = 1$. It follows that
\begin{eqnarray*}
	\vec{v}_q &=& \sum_{j=1}^k \frac{1}{2}(\epsilon_j + 1) + \sum_{j=1}^k -\frac{1}{2} (\epsilon_j - 1)(v_j+1)\\
	&=& \vec{v}_h + \sum_{j=1}^k \frac{1}{2}(1 - v_j \epsilon_j),
\end{eqnarray*}
as desired.
\end{proof}

Note that $\sum_{j=1}^k \frac{1}{2}(1- v_j\epsilon_j)$ is the length of the TL word in elementary TL generators corresponding to the vertex $\vec{v}$. Equivalently, it is the number of terms of the form $(P_{i_j} \otimes {}_{i_j}P)$ in the corresponding tensor product $\cN_{i_1} \otimes_{A_m} \ldots \otimes_{A_m} \cN_{i_k}$.

\section{Main Theorem} \label{sec:MainThm}
In the following, 
\begin{itemize}
	\item $\cM_{\sigma}$ denotes the Khovanov-Seidel bimodule associated to a braid, $\sigma = \sigma_{i_1}^\pm \cdots \sigma_{i_k}^\pm \in \Braid_{m+1}$ as described in Section \ref{sec:KhS}, 
	\item $\SKh(m(\widehat{\sigma}); m-1)$ denotes the ``next-to-top" (f)iltration grading of the sutured Khovanov homology of $m(\widehat{\sigma}) \subset A \times I$, the {\em mirror} (all crossings reversed) of the braid closure, $\widehat{\sigma}$, in $A \times I$, as in Section \ref{sec:SKh}.
\end{itemize}

\begin{theorem} \label{thm:SKhisHH} Let $\sigma \in \Braid_{m+1}$ and $m(\widehat{\sigma}) \subset A \times I$ the mirror of its annular closure.  \[ \SKh\left(m(\widehat{\sigma});m-1\right) \cong HH\left(A_m, \mathcal{M}_\sigma\right)[n_-]\{(m-1) + n_+-2n_-\}\] as {\em bigraded} vector spaces.
\end{theorem}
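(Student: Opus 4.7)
The plan is to realize both $HH(A_m, \cM_\sigma)$ and $\SKh(m(\widehat{\sigma}); m-1)$ as iterated mapping cones over the common cube $[-1,1]^k$ induced by the factorization $\sigma = \sigma_{i_1}^\pm\cdots\sigma_{i_k}^\pm$, and then match them vertex-by-vertex and edge-by-edge as bigraded chain complexes. The cube structure on $\cM_\sigma$ is built into the construction of Section~\ref{sec:KhS}; mirroring (which swaps the two resolutions at every crossing) lines up the cube \eqref{eqn:Khorientconv} for $\SKh(m(\widehat{\sigma}))$ with the cube \eqref{eqn:KhSorientconv} for $\cM_\sigma$ via the dictionary $P_{i_j} \otimes {}_{i_j}P \leftrightarrow U_{i_j}$ and $A_m \leftrightarrow \mbox{Id}$.

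The heart of the proof is the vertex-wise comparison. Fix a vertex $\vec{v}$, let $s$ be the number of cup-cap pairs in the reduced Temperley--Lieb diagram $N_{i_1}\cdots N_{i_k}$ obtained from the resolution, and let $t$ be the number of trivial circles in its annular closure; that closure then consists of $(m+1)-2s$ wrapping circles plus those $t$ trivial circles. Enumerating enhanced Kauffman states in filtration grading $m-1$ shows the SKh contribution at $\vec{v}$ vanishes unless $s \leq 1$: for $s=0$ (which forces all $N_{i_j}=\mbox{Id}$ and $t=0$) there are $\binom{m+1}{1}=m+1$ states, and for $s=1$ there are $2^t$ states (all $m-1$ wrapping circles counter-clockwise, trivial circles freely oriented). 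On the Hochschild side I would apply cyclic invariance (Lemma~\ref{lem:HHTensor} and Corollary~\ref{cor:Morita}) to rearrange $HH(A_m, \cN_{i_1} \otimes_{A_m}\cdots\otimes_{A_m} \cN_{i_k})$, collapsing maximal runs of $P \otimes {}_iP$ factors into ``loop subalgebras'' of the form $(i)A_m(i)$ (each $2$-dimensional, corresponding to one trivial circle in the closure) and producing an outer $HH(A_m,A_m)$ factor when $s=0$. The case-by-case matching then reads: $s=0$ reduces to showing $HH(A_m,A_m)$ has the correct bigraded dimension $m+1$; $s=1$ follows from $HH(A_m, P_i\otimes {}_iP) \cong (i)A_m(i)$ together with its iterates over trivial circles; and $s\geq 2$ requires the Hochschild complex at $\vec{v}$ to be acyclic, which should fall out of iterated applications of the defining relations $(i-1|i|i+1)=0$, $(i|i+1|i)=(i|i-1|i)$, and $(0|1|0)=0$ --- the algebraic shadow of the Jones relations $U_iU_{i\pm 1}U_i=U_i$ and $U_i^2 = \bigcirc\amalg U_i$.

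Once vertices are matched as bigraded vector spaces, the remaining work is essentially bookkeeping: check that the cube edge maps correspond (the SKh split/merge maps must match the maps induced by $\beta_i$ and $\gamma_i$ on the Hochschild complexes under the vertex identification), and verify the global shift $[n_-]\{(m-1)+n_+-2n_-\}$ by comparing the $(h,q)$ grading formulas of Section~\ref{sec:SKh} with the shifts of Lemma~\ref{lem:gr}, accounting for the mirror exchange $n_+\leftrightarrow n_-$. I expect the main obstacle to be the $s\geq 2$ vanishing: managing the interplay between Jones-relation simplifications of the TL diagrams and the corresponding algebraic identities in $A_m$ uniformly --- especially since the boundary relation is imposed asymmetrically, only at vertex $0$ --- will likely require a careful induction on the number of $P\otimes {}_iP$ factors in the resolution.
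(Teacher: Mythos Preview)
Your overall strategy—realize both sides as cubes over $[-1,1]^k$ and match vertex-by-vertex, edge-by-edge—is exactly the paper's. The paper, however, inserts the coinvariant quotient $\cQ(\cM_\sigma)$ as an intermediate object: Proposition~\ref{prop:SKhiscoinvar} matches $\SKh(m(\widehat\sigma);m-1)$ with $H_*(\cQ(\cM_\sigma))$ by the cube comparison you outline, and Proposition~\ref{prop:HHiscoinvar} separately shows $H_*(\cQ(\cM_\sigma))\cong HH(A_m,\cM_\sigma)$ by proving that the Hochschild homology of \emph{every} vertex bimodule is concentrated in degree~$0$, so that the double-complex spectral sequence collapses at $E_2$. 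Your proposal needs precisely this collapse to justify ``realizing $HH(A_m,\cM_\sigma)$ as an iterated mapping cone over the cube,'' but you never isolate or address it.

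More seriously, you have misidentified the main obstacle. The $s\geq 2$ vanishing is the \emph{easy} part: any vertex with at least one $P_i\otimes{}_iP$ factor is a projective $A_m^e$--module (it is a summand of $A_m\otimes A_m$), so $HH_{>0}$ vanishes automatically, and $HH_0=\cQ$ is zero by the one-line check ${}_jP\otimes_{A_m}P_i=0$ for $|i-j|>1$ (the paper's Case~V4). The asymmetric boundary relation $(0|1|0)=0$ plays no role there. Where it \emph{is} essential is exactly the step you dismiss in a clause: showing $HH(A_m,A_m)$ has total dimension $m+1$, equivalently $HH_n(A_m,A_m)=0$ for $n>0$. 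This does not fall out of elementary manipulations with the quadratic relations; the paper proves it by constructing a Morita equivalence (in the sense of Corollary~\ref{cor:Morita}) between $A_m$ and a \emph{directed} algebra $B_m^{Kh}$—the endomorphism algebra of an exceptional collection of Lefschetz thimbles—whose cyclic bar complex is then visibly concentrated in degree~$0$. Without this computation (or a substitute of comparable depth), the $s=0$ vertex is unresolved, and with it the spectral-sequence collapse on which your entire cube comparison rests.
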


\begin{remark} The {\em mirror} of $\widehat{\sigma}$ appears on the left-hand side of the equivalence above because of differing braid conventions in \cite{MR1740682} and \cite{MR1862802}, cf. \eqref{eqn:Khorientconv} and \eqref{eqn:KhSorientconv}.
\end{remark}
\begin{proof}
We prove, in Proposition \ref{prop:SKhiscoinvar}, that \[\SKh\left(m(\widehat{\sigma}); m-1\right) \cong H_*(\cQ(\mathcal{M}_\sigma))[n_-]\{(m-1)+n_+-2n_-\},\] where $H_*(\cQ(\cM_\sigma))$ is the homology of the coinvariant quotient module\footnote{It is proven in \cite{MR1862802} that the homotopy equivalence class of $\cM_\sigma$ does not depend on the chosen factorization of $\sigma$ as a product of elementary braids. Since any homotopy equivalence descends to the coinvariant quotient, we are justified in suppressing the factorization from the notation.} (Definition \ref{defn:coinvariant}) of \[\cM_\sigma := \mathcal{M}_{\sigma_{i_1}^\pm \cdots \sigma_{i_k}^\pm}.\]  But Proposition \ref{prop:HHiscoinvar} tells us that \[H_*(\cQ(\mathcal{M}_\sigma)) \cong HH_*(A_m, \mathcal{M}_\sigma),\] as desired.
\end{proof}

\begin{proposition} \label{prop:SKhiscoinvar} Let $\sigma \in \Braid_{m+1}$ be a braid of index $m+1$ and $\widehat{\sigma} \subset A \times I$ its closure.  Then \[\SKh\left(m(\widehat{\sigma});m-1\right) \cong H_*\left(\mathcal{Q}\left(\mathcal{M}_{\sigma}\right)\right)[n_-]\{(m-1) + n_+-2n_-\}\] as bigraded vector spaces. 
\end{proposition}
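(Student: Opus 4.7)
The plan is to construct a chain-level isomorphism, carrying the stated grading shifts, between the filtration-$(m-1)$ piece of the annular Khovanov complex of $m(\widehat{\sigma})$ and the coinvariant quotient complex $\cQ(\cM_\sigma)$, by matching them vertex-by-vertex in their respective cube-of-resolutions descriptions. Fix a factorization $\sigma=\sigma_{i_1}^\pm\cdots\sigma_{i_k}^\pm$. As recalled in Sections \ref{sec:SKh} and \ref{sec:KhS}, both complexes are indexed by $\{-1,+1\}^k$. Taking the mirror reverses every exponent $\epsilon_j$, so comparing \eqref{eqn:Khorientconv} and \eqref{eqn:KhSorientconv} one sees that at each vertex $\vec{v}$ the SKh resolution assigns $N_{i_j}=U_{i_j}$ precisely when the Khovanov--Seidel bimodule complex assigns $\cN_{i_j}=P_{i_j}\otimes {}_{i_j}P$; otherwise both sides see an identity factor. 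This is why the mirror appears on the left-hand side.

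\textbf{Vertex identification.} At each vertex $\vec{v}$, I would compute $\cQ(\cN_{i_1}\otimes_{A_m}\cdots\otimes_{A_m}\cN_{i_k})$ using Lemma \ref{lem:QCyclic} together with the elementary simplifications $A_m\otimes_{A_m}A_m\cong A_m$ and ${}_{i}P\otimes_{A_m}P_{j}\cong(i)A_m(j)$. Merging consecutive $A_m$-factors and merging consecutive $P\otimes{}P$-factors through their interior $A_m$'s, one can then cycle the outermost ${}_iP$ around using Lemma \ref{lem:QCyclic} and absorb all bimodule structure, producing a plain vector space which is a cyclic tensor product $\bigotimes_{s}(i_{a_s})A_m(i_{a_{s+1}})$ of idempotent truncations of $A_m$ (indices taken cyclically). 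In the all-identity case this reduces to $\cQ(A_m)=A_m/[A_m,A_m]$; applying the defining relations of $A_m$, commutators kill every edge and every length-$2$ loop $(i|i\pm1|i)$, leaving the $(m+1)$-dimensional space spanned by the constant paths $(0),\ldots,(m)$. This matches the $(m+1)$-dimensional filtration-$(m-1)$ piece of SKh at the identity resolution, where exactly one of the $m+1$ essential circles is labeled $v_-$.

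\textbf{General vertices, gradings, and edges.} At a general vertex the annular closure of $N_{i_1}\cdots N_{i_k}$ splits into essential and contractible circles; the filtration-$(m-1)$ enhanced states are those with $p-q=m-1$ for $p,q$ the numbers of CCW and CW essential circles, with arbitrary $v_\pm$-labels on contractible ones. I would establish a natural bijection between these states and a basis for the cyclic tensor product $\bigotimes(i_{a_s})A_m(i_{a_{s+1}})$: the indices $i_{a_s}$ record how essential circles traverse the TL diagram around the annulus, the internal (negative path length) grading on $A_m$ encodes whether each essential circle is oriented CCW (short paths) or CW (longer paths, reflecting the ``next-to-top'' character of the filtration), and contractible circles correspond to the extra $W=\F_{(0,-1)}\oplus\F_{(0,1)}$ summands produced by the cyclic rotation. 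With this bijection in hand, the overall grading shift $[n_-]\{(m-1)+n_+-2n_-\}$ is a direct computation: compare the SKh formulas for $h$ and $q$ in Section \ref{sec:SKh} to the vertex shifts in Lemma \ref{lem:gr}, using that $n_++n_-=k$. For the differentials, each edge of the cube flips a single $v_j$ from $-1$ to $+1$; one must check that the SKh split/merge map at that crossing agrees under the bijection with the coinvariant-quotient map induced by $\beta_{i_j}$ (if $\epsilon_j=+1$) or $\gamma_{i_j}$ (if $\epsilon_j=-1$). This is a local check, splitting into cases according to the topological type of the two circles involved (both essential, both contractible, or mixed).

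The main obstacle will be the vertex-level combinatorial identification: producing a natural basis for the cyclic tensor product of truncations $(i_a)A_m(i_b)$ that bijects with filtration-$(m-1)$ enhanced Kauffman states on the annular closure, and checking that the negative-path-length grading on $A_m$ translates correctly into $v_\pm$-labels on circles. This is what singles out the ``next-to-top'' filtration as the geometric counterpart to the full coinvariant quotient, rather than some smaller piece of the $A_m$-module theory; getting the bookkeeping right---in particular understanding precisely which $A_m$-paths correspond to $v_-$-labeled essential circles---is the heart of the argument.
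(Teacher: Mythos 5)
Your overall strategy is exactly the paper's: identify the two cube-of-resolutions complexes vertex-by-vertex (using the mirror to reconcile the orientation conventions of \eqref{eqn:Khorientconv} and \eqref{eqn:KhSorientconv}, simplifying each vertex via ${}_aP\otimes_{A_m}P_b$, and using Lemma \ref{lem:QCyclic} to reduce the edge-map check to a small number of local cases), then match gradings against Lemma \ref{lem:gr}. However, the step you yourself flag as ``the heart of the argument'' contains a genuine gap, and the mechanism you propose for closing it would fail. You suggest that the negative-path-length grading on $A_m$ encodes the CCW/CW orientation of the \emph{essential} circles, so that a natural basis-to-basis bijection does the job. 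It does not. At a vertex where every $\cN_{i_j}=A_m$, the coinvariant quotient $\cQ(A_m)$ is spanned entirely by the length-zero idempotents $(0),\dots,(m)$ (every positive-length path, including each loop $(i|i\pm 1|i)$, dies in the quotient), while the corresponding SKh space is spanned by the $m+1$ states $\theta_i$ in which exactly one essential circle is labeled $v_-$. The naive bijection $(i)\mapsto\theta_i$ fails to intertwine the edge maps: the sutured split map sends the $w_+$-labeled state to $\theta_i+\theta_{i-1}$, whereas $\cQ(\beta_i)$ sends $(i)\otimes(i)$ to the single idempotent $(i)$. The paper resolves this with the non-obvious change of basis $\Phi[(0)]=\theta_0$ and $\Phi[(i)]=\theta_i+\theta_{i-1}$ for $i\geq 1$, chosen precisely so that all edge maps commute; this is the missing idea in your plan.

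Relatedly, the length-$0$ versus length-$2$ dichotomy in $A_m$ does track circle labels, but for the \emph{trivial} (contractible) circles rather than the essential ones: $\cQ(P_i\otimes{}_iP)$ is $2$-dimensional with generators $(i)\otimes(i)$ and $(i)\otimes(i|i-1|i)$, corresponding to $w_+$ and $w_-$ on the trivial circle created by closing up $U_i$, and each closed component of the TL diagram contributes a similar $\F[x]/x^2$-type factor via a cyclically adjacent pair ${}_iP\otimes_{A_m}P_i$. In filtration degree $m-1$ all essential circles at a non-identity vertex are forced to carry $v_+$, so no path-length bookkeeping is needed for them at all. Once you replace your proposed dictionary with this one and build in the change of basis above, the rest of your outline (vanishing when some $|i_{j_a}-i_{j_{a+1}}|>1$ or $|i_{j_1}-i_{j_n}|>1$, the grading-shift computation from Lemma \ref{lem:gr}, and the case-by-case edge check for the maps $\beta_i$, $\gamma_i$ against the merge/split maps) goes through as in the paper.
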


\begin{proof}  
As noted in Section \ref{sec:KhS}, the Khovanov-Seidel complex $\mathcal{M}_\sigma$ (and, hence, its coinvariant quotient complex $\cQ(\cM_\sigma)$) has a description in terms of a ``cube of resolutions," by identifying $A_m$ with the trivial TL object and $P_i \otimes {}_i{P}$ with the $i$--th elementary TL object as pictured in Figure \ref{fig:TemperleyLieb}.  The complex $\CKh(m(\widehat{\sigma}) \subset A \times I)$ whose homology is $\SKh(\widehat{\sigma})$ (denoted $\mathcal{C}(m(\widehat{\sigma}) \subset A \times I)$ in \cite[Sec. 2]{GT07060741}--see also \cite{MR2113902}, \cite[Sec. 4]{AnnularLinks}) is also described in terms of a cube of resolutions.  Therefore, to prove that $\SKh(m(\widehat{\sigma});m-1)$ coincides with $\cQ\left(\mathcal{M}_{\sigma}\right)$ (up to the stated grading shift) it suffices to verify that the two cubes of resolutions assign isomorphic bigraded $\F$--vector spaces to the vertices and that,  with respect to this isomorphism, the edge maps agree.

We begin with the vertices of the cube, comparing case-by-case:
\begin{itemize}
	\item the coinvariant quotient, $\cQ\left(\cN_{i_1} \otimes_{A_m} \ldots \otimes_{A_m} \cN_{i_k}\right)$, of the Khovanov-Seidel bimodule associated to a vertex, and
	\item the $\F$--vector space associated to the corresponding vertex in the ``next-to-top" grading of the sutured Khovanov complex, $\CKh(m(\widehat{\sigma});m-1)$.
\end{itemize}  

\begin{figure}
\begin{center}
\resizebox{3in}{!}{\input{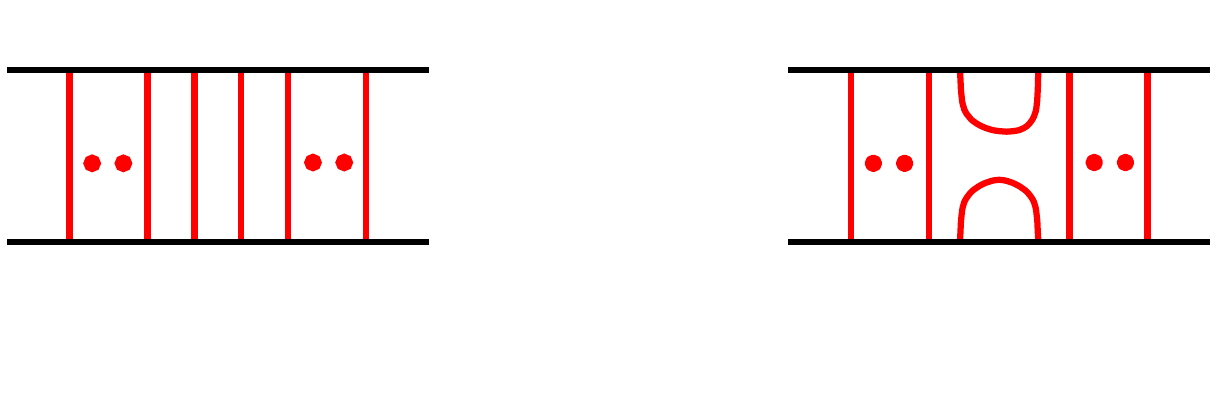_t}}
\end{center}
\caption{The (derived equivalence class of the) dg $A_m$--bimodule associated to an elementary Artin braid $\sigma_i^\pm$ is a (grading-shifted) mapping cone of the bimodules $A_m$ and $P_i \otimes {}_iP$; the former is identified with the trivial TL object (left) and the latter is identified with the $i$th elementary TL object (right).}
\label{fig:TemperleyLieb}
\end{figure}

See Figure \ref{fig:VertexCases} for TL diagrams associated to the various cases.

\begin{figure}
\begin{center}
\resizebox{5in}{!}{\input{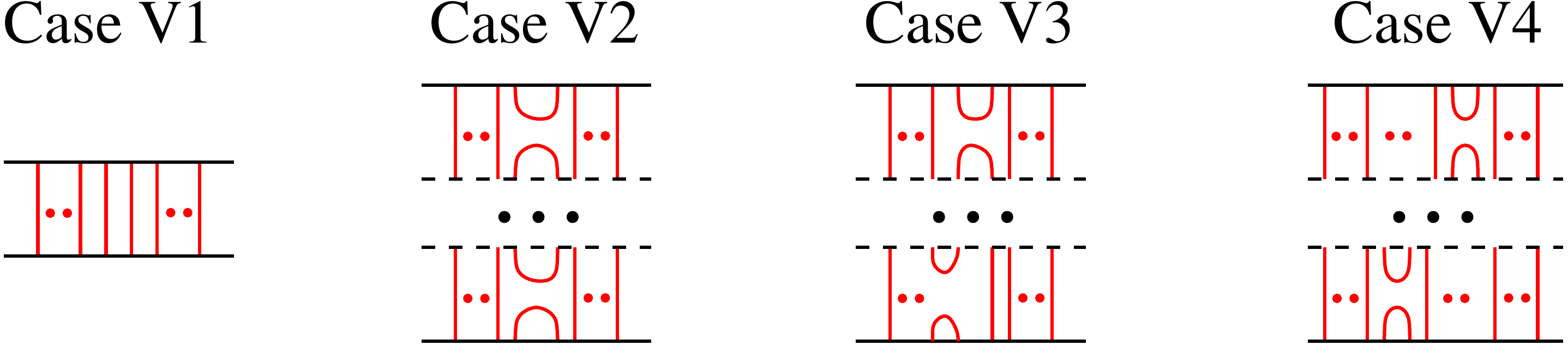_t}}
\end{center}
\caption{We have enumerated above the different types of TL diagrams that can appear at the vertices of a cube of resolutions.}
\label{fig:VertexCases}
\end{figure}

In what follows, we shall always assume that we have used the isomorphism \[M \otimes_{A_m} A_m \cong A_m \otimes_{A_m} M \cong M\] to eliminate extraneous copies of $A_m$ in the tensor product associated to a vertex.  Whenever we refer to a {\em trivial} (resp., {\em nontrivial}) circle in a resolution of the annular closure, we mean a component of the resolution that represents the trivial (resp., nontrivial) element of $H_1(A;\Z/2\Z)$.

{\flushleft {\bf Case V1: $\mathcal{N}_{i_1} \otimes_{A_m} \ldots \otimes_{A_m} \mathcal{N}_{i_k} \cong A_m$}}

{\flushleft \underline{Khovanov-Seidel coinvariant quotient:}}

Note that \[A_m = \bigoplus_{i, j = 0}^m {}_iP_j\] where ${}_iP_j := {}_iP\otimes_{A_m} P_j$.  The coinvariant quotient, $\cQ(A_m)$ is, by definition, the quotient of the (dg) bimodule $A_m$ by the ideal generated by the image of the map $\phi: A_m \otimes A_m \rightarrow A_m$ defined by $\phi(a \otimes b) = ab + ba$. 

Therefore any element of ${}_iP_j$ for $i \neq j$ becomes $0$ when we pass to $\cQ(A_m)$, since each such element can be expressed as $\phi[(i)\otimes (i|\ldots|j)]$.  Furthermore, any element of ${}_iP_i$ of the form $(i|i-1|i) = (i|i+1|i)$ is also $0$ in $\cQ(A_m)$, since we have \[(i|i-1|i) = (i-1|i|i-1) + \phi[(i|i-1)\otimes (i-1|i)].\] 

Now, noting that $(i-1|i|i-1) = (i-1|i-2|i-1) \in A_m$ and iterating, we see that $(i|i-1|i) + (0|1|0)$ is in the image of $\phi$, but $(0|1|0) = 0 \in A_m$.

The only possible nonzero elements of $\cQ(A_m)$ are therefore the idempotents, $\{(i) \in {}_iP_i\}$. But these can only be decomposed as $(i) \otimes (i) \in A_m \otimes A_m$, so we obtain no new relations among them when we pass to the coinvariant quotient. 

We conclude that the vector space associated to a vertex of this type is $(m+1)$--dimensional, with basis given by the idempotents $(0), (1), \ldots, (m)$. In other words, the Khovanov-Seidel coinvariant quotient complex assigns the (grading-shifted) bigraded vector space \[\F_{\left(\vec{v}_h,\vec{v}_q\right)}^{m+1}[n_-]\{(m-1)+(n_+-2n_-)\}\] to the vertex $\vec{v} = (v_1, \ldots, v_k) \in \{-1,1\}^k$ .

{\flushleft \underline{Sutured annular Khovanov complex:}} 

The vertex associated to the closure of the trivial TL object in the cube of resolutions for $\CKh(m(\widehat{\sigma});m-1)$ is also $(m+1)$--dimensional, since the resolution consists of $m+1$ nontrivial circles.  The vector space associated to this vertex in the $f$ grading $m-1$ (the ``next-to-top" one) has a basis given by the $m+1$ enhanced resolutions where exactly one of the circles has been labeled $v_-$, and the rest have been labeled $v_+$.  The bigraded vector space associated to the sutured Khovanov complex at vertex $\vec{v}$ is therefore: \[\F_{\left(\vec{v}_h,(m-1) + \vec{v}_h\right)}^{m+1}[n_-]\{(n_+-2n_-)\}.\] But $\vec{v}_h = \vec{v}_q$ in this case (cf. Lemma \ref{lem:gr}), so the two bigraded vector spaces agree.

{\flushleft \underline{Isomorphism:}}

For $i = 0, 1, \ldots, m$, let $\theta_i$ denote the basis element of $\CKh(m(\widehat{\sigma});m-1)$ described above whose $i$th circle is labeled $v_-$. Then the linear map $\Phi: \cQ(A_m) \rightarrow \mbox{\CKh}(m(\widehat{\sigma});m-1)$:
\[\Phi[(i)] = \left\{\begin{array}{ll}
			\theta_i & \mbox{if $i=0$, and}\\
			\theta_i + \theta_{i-1} & \mbox{otherwise}
			\end{array}\right.\] is an isomorphism of bigraded $\mathbb{F}$--vector spaces (strategically chosen so that the boundary maps along the edges of the cube will agree).

{\flushleft {\bf Case V2: $\mathcal{N}_{i_1} \otimes_{A_m} \ldots \otimes_{A_m} \mathcal{N}_{i_k} = (P_{i_{j_1}} \otimes {}_{i_{j_1}}{P}) \otimes_{A_m} \ldots \otimes_{A_m} (P_{i_{j_n}} \otimes {}_{i_{j_n}}P)$}, where $i_{j_1} = i_{j_n}$}

(We allow here the possibility that $n=1$.)

{\flushleft \underline{Khovanov-Seidel:}}

Rewrite the above tensor product as:
\[P_{i_{j_1}} \otimes ({}_{i_{j_1}}P \otimes_{A_m} P_{i_{j_2}}) \otimes \ldots \otimes ({}_{i_{j_{n-1}}} P \otimes_{A_m} P_{i_{j_n}}) \otimes {}_{i_{j_n}} P\] as in the proof of \cite[Thm. 2.2]{MR1862802}, and note that
\[{}_a P \otimes_{A_m} P_b = \left\{\begin{array}{cl}
	\SpanF\{ (a), (a|a-1|a)\} & \mbox{if $|a-b| = 0$,}\\
	\SpanF\{ (a|b) \} & \mbox{if $|a-b| = 1$, and}\\
	0 & \mbox{if $|a-b|>1$.}\end{array}\right.\]

The corresponding Khovanov-Seidel bimodule is therefore $0$ if there exists $a \in \{1, \ldots, n-1\}$ such that $|i_{j_a} - i_{j_{a+1}}| > 1$. Now suppose there exists no such adjacent pair. Then if the associated TL diagram has $\ell$ closed components, we claim that its closure will have: 
\begin{itemize}
	\item $\ell + 1$ trivial closed circles and
	\item $m-1$ additional nontrivial circles.
\end{itemize}

That there are $\ell + 1$ trivial closed circles in the closure is clear. To see that there are exactly $m-1$ nontrivial circles in the closure, proceed by induction on $n$, the length of the associated TL word. The base case ($n=1$) is quickly verified. If $n > 1$, the assumptions
\begin{enumerate}
	\item There exists no $a \in \{1, \ldots, n-1\}$ such that $|i_{j_a} - i_{j_{a+1}}| > 1$, and
	\item $i_{j_1} = i_{j_n}$
\end{enumerate}
imply that there exists some subword of the TL word that can be replaced by a shorter subword using one of the Jones relations: \eqref{eqn:trivcirc}-\eqref{eqn:kinkleft}, and the claim follows.

Corresponding applications of \cite[Thm 2.2]{MR1862802}\footnote{Note that, since we are using the {\em negative path length} rather than {\em steps-to-the-left} grading, we have $\{-2\}$ rather than $\{1\}$ shifts on the RHS of \cite[Eqns. 2.2-2.4]{MR1862802}.} now allow us to replace the original Khovanov-Seidel bimodule with the quasi-isomorphic bimodule $\bigcirc_{\ell} \,\, (P_{i_{j_1}} \otimes {}_{i_{j_1}}P)\{-(n-1)\}$.

We therefore need only understand $\cQ(P_{i_{j_1}} \otimes {}_{i_{j_1}}P)$. An analysis similar to the one conducted in Case V1 implies that $\cQ(P_{i_{j_1}} \otimes {}_{i_{j_1}}P)$ is free of rank $2$, generated by $(i_{j_1}) \otimes (i_{j_1})$ and $(i_{j_1}) \otimes (i_{j_1}|i_{j_1}-1|i_{j_1})$ (identified with $(i_{j_1}-1|i_{j_1}) \otimes (i_{j_1}|i_{j_1}-1)$ and $(i_{j_1}|i_{j_1}-1|i_{j_1}) \otimes (i_{j_1})$ in the coinvariant quotient module).

We conclude that the (grading-shifted) Khovanov-Seidel coinvariant quotient associated to a vertex of this type is $0$ if there exists $a \in \{1, \ldots, n-1\}$ such that $|i_{j_a} -i_{j_{a+1}}| > 1$, and
\[\bigcirc_{\ell + 1} \mathbb{F}_{(\vec{v}_h,\vec{v}_q-n)}[n_-]\{(m-1) + (n_+ - 2n_-)\}\] otherwise.

{\flushleft \underline{Sutured Khovanov:}}

Suppose that there exists $a \in \{1, \ldots, n-1\}$ such that $|i_{j_a} - i_{j_{a+1}}| > 1$. Then the closure of the corresponding TL diagram can have no more than $m-3$ nontrivial circles.  
The vector space associated to a vertex of this type in this case is therefore $0$, as it is in the Khovanov-Seidel setting.

Now suppose there exists no such adjacent pair. As explained above, if the associated TL diagram has $\ell$ closed components, its closure will have $\ell + 1$ trivial closed circles and $m-1$ additional nontrivial circles.

The $\CKh(m(\widehat{\sigma});m-1)$ vector space therefore has a basis in $1:1$ correspondence with enhanced resolutions whose $(m-1)$ nontrivial circles have all been labeled $v_+$ and the $(\ell + 1)$ trivial circles have been labeled with either $w_{\pm}$. The associated (grading-shifted) bigraded vector space is therefore: \[\bigcirc_{\ell + 1} \mathbb{F}_{(\vec{v}_h,(m-1) + \vec{v}_h)}[n_-]\{n_+ - 2n_-\}.\] Since $n = \sum_{j=1}^k (1-v_j\epsilon_j)$ (cf. Lemma \ref{lem:gr}), the two bigraded vector spaces agree.

{\flushleft \underline{Isomorphism:}}

If there exists $a \in \{1, \ldots, n-1\}$ with $|i_{j_a}-i_{j_{a+1}}| > 1$, the vector spaces (and isomorphism) are trivial.

Otherwise, each of the $(\ell+1)$ trivial circles in the closure of the TL diagram corresponds to either an adjacent pair \[\cdots \otimes ({}_{i_{j_a}}P \otimes_{A_m} P_{i_{j_{a+1}}}) \otimes \cdots\] with $i_{j_a} = i_{j_{a+1}}$ or to the pair of outer terms \[P_{i_{j_1}} \otimes \ldots \otimes {}_{i_{j_n}} P,\] which by assumption also satisfy $i_{j_1} = i_{j_n}$.


In fact, the basis elements of $\cQ(\cM_\sigma)$ are in $1:1$ correspondence with labelings of the corresponding resolved diagram, where each trivial circle is labeled with either the length $0$ path $(i_{j_a})$ or the length $2$ path $(i_{j_a}|i_{j_a} - 1|i_{j_a})$. Similarly, the basis elements of $\CKh(m(\widehat{\sigma});m-1)$ are in $1:1$ correspondence with labelings of the resolved diagram, where each trivial circle is labeled with either a $w_+$ or a $w_-$.

We therefore obtain an isomorphism \[\Phi: \cQ((P_{i_{j_1}} \otimes {}_{i_{j_1}}P) \otimes_{A_m} \ldots \otimes_{A_m} (P_{i_{j_n}} \otimes {}_{i_{j_n}} P)) \rightarrow \CKh(\widehat{\sigma};m-1)\] by identifying the length $0$ (resp., length $2$) path labels on the Khovanov-Seidel side with the $w_+$ (resp., $w_-$) labels on the sutured Khovanov side.

{\flushleft \bf{Case V3: }$\mathcal{N}_{i_1} \otimes_{A_m} \ldots \otimes_{A_m} \mathcal{N}_{i_k} = (P_{i_{j_1}} \otimes {}_{i_{j_1}}{P}) \otimes_{A_m} \ldots \otimes_{A_m} (P_{i_{j_n}} \otimes {}_{i_{j_n}}P)$}, where $|i_{j_1} - i_{j_n}| = 1$.

{\flushleft \underline{Khovanov-Seidel:}}

As in Case V2, the vertex is assigned $0$ if there exists $a \in \{1, \ldots, n-1\}$ such that $|i_{j_a} - i_{j_{a+1}}| > 1$. If there does not exist such an adjacent pair, then the analysis proceeds much as in Case V2. If the TL diagram has $\ell$ closed components, then its closure will have:
\begin{itemize}
	\item $\ell$ trivial circles and 
	\item $(m-1)$ nontrivial circles,
\end{itemize}
by an inductive argument analogous to the one used in Case V2. The corresponding Khovanov-Seidel bimodule is isomorphic to $\langle \ell \rangle (P_{i_{j_1}} \otimes {}_{i_{j_n}} P)\{-(n-1)\}$. 

Since $|i_{j_1} - i_{j_n}| = 1$, $\cQ(P_{i_{j_1}} \otimes {}_{i_{j_n}} P)$ is $1$--dimensional, generated by $(i_{j_1}) \otimes (i_{j_n}|i_{j_1})$.

We conclude that the (grading-shifted) Khovanov-Seidel coinvariant quotient associated to a vertex  $\vec{v}$ of this type is $0$ if there exists $a \in \{1, \ldots, n-1\}$ such that $|i_{j_a} - i_{j_{a+1}}| > 1$, and \[\langle \ell \rangle \F_{(\vec{v}_h,\vec{v}_q-n)}[n_-]\{(m-1)+(n_+-2n_-)\}\] otherwise.

{\flushleft \underline{Sutured Khovanov:}}

As before, the vector space is $0$ if there exists $a \in \{1, \ldots, n-1\}$ such that $|i_{j_a} - i_{j_{a+1}}| > 1$.

Otherwise, suppose that the associated TL diagram contains $\ell$ closed circles. Then, as above, its closure will have $\ell$ trivial circles and $(m-1)$ nontrivial circles. One of these $(m-1)$ nontrivial circles is distinguished by the property that it contains both the {\em cap} of the first elementary TL element and the {\em cup} of the last elementary TL element in the tensor product.

Just as in Case V2, we therefore assign the bigraded vector space: \[\langle \ell\rangle \mathbb{F}_{(\vec{v}_h,(m-1) + \vec{v}_h)}[n_-]\{n_+ - 2n_-\}\] and define a completely analogous isomorphism $\Phi$.

{\flushleft {\bf Case V4: $\mathcal{N}_{i_1} \otimes_{A_m} \ldots \otimes_{A_m} \mathcal{N}_{i_k} = (P_{i_{j_1}} \otimes {}_{i_{j_1}}{P}) \otimes_{A_m} \ldots \otimes_{A_m} (P_{i_{j_n}} \otimes {}_{i_{j_n}}P)$}, where $|i_{j_1} - i_{j_n}| > 1$}

Here we see immediately that the coinvariant quotient of the Khovanov-Seidel bimodule vanishes  because there are no nonzero paths in $A_m$ between vertices separated by distance more than $1$, and the sutured annular Khovanov complex in $f$ grading $m-1$ vanishes since the closure of the corresponding TL object can have no more than $(m-3)$ nontrivial circles.
\vskip 10pt
We now verify that the edge maps agree. Again, this is a check of a finite number of cases, each related to one of those pictured in Figure \ref{fig:Cases} by a finite sequence of cyclic permutations and horizontal reflections.  By applying Lemma \ref{lem:QCyclic}, we may assume without loss of generality that the merge or split defining the edge map occurs in the final (top) term. 

\begin{figure}
\begin{center}
\resizebox{5.5in}{!}{\input{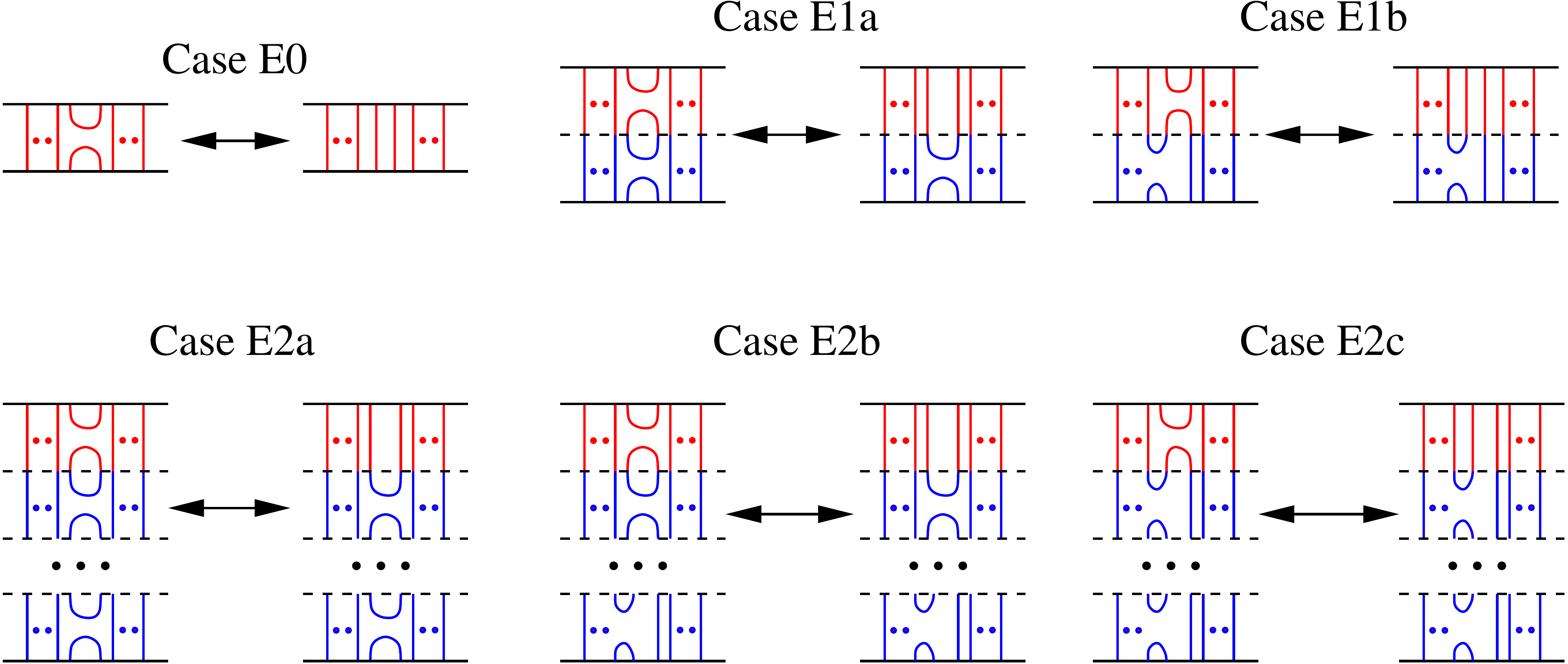_t}}
\end{center}
\caption{We have enumerated (modulo vertical cyclic permutation and horizontal reflection) all possible merges/splits occurring in {\em nontrivial} edge maps in the cube-of-resolution models for the Khovanov-Seidel and sutured Khovanov complexes. The relevant merge/split is pictured in red, and the cases are enumerated according to which TL object(s) are (cyclically) adjacent to the merge/split. Note that the edge map is necessarily $0$ if the local configuration does not fall into one of the cases above, since one or both of the vertices it connects must be $0$ by the argument given in Cases V2--V4.}
\label{fig:Cases}
\end{figure}

On the sutured Khovanov side, each of the edge maps corresponds to a map induced by a merge/split cobordism of one of the following types:
\begin{itemize}
	\item {\em Type I}: Two nontrivial circles $\longleftrightarrow$ One trivial circle. This corresponds to merge/split maps of the form $V \otimes V \longleftrightarrow W$ in the language of \cite[Sec. 2]{GT07060741}. Example E0 is of this type.
	\item {\em Type II}: Two trivial circles $\longleftrightarrow$ One trivial circle. This corresponds to merge/split maps of the form $W \otimes W \longleftrightarrow W$. Examples E1a and E2a are of this type.
	\item {\em Type III}: One trivial and one nontrivial circle $\longleftrightarrow$ One nontrivial circle. This corresponds to merge/split maps of the form $V \otimes W \longleftrightarrow V$. Examples E1b, E2b, and E2c are of this type.
\end{itemize}

While describing Cases V2-V4, we noted that every trivial circle in the annular closure of a TL element arises as a result of a (cyclically) adjacent pair \[\cdots \otimes {}_{i} P_{i} \otimes \cdots := \cdots \otimes ({}_{i} P \otimes_{A_m} P_{i}) \otimes \cdots\] in the associated Khovanov-Seidel tensor product. Moreover, ``labelings" of the trivial circles (by either the length $0$ or length $2$ path) on the Khovanov-Seidel side correspond, via the isomorphism $\Phi$, to labelings of the trivial circles (by either $w_+$ or $w_-$). It can now be seen directly that the Khovanov-Seidel maps $\beta_i$ and $\gamma_i$ behave exactly like the sutured Khovanov merge/split maps in all cases. We include an explicit verification of this in Cases E0, E1a, and E1b. The other cases are similar.

{\flushleft {\bf Case E0: $P_i \otimes {}_iP\longleftrightarrow A_m $}}

{\flushleft ``$\rightarrow$":} 

On the Khovanov-Seidel side, recall from Case V2 that $\cQ(P_i \otimes {}_iP)$ has basis given by \[\{(i) \otimes (i), (i)\otimes (i|i-1|i)\}\] and from Case V1 that $\cQ(A_m)$ has basis given by the idempotents \[\{(0), \ldots, (m)\}.\] Moreover, 
\begin{eqnarray*}
	\cQ(\beta_i)[(i) \otimes (i)] &=& (i)\\
	\cQ(\beta_i)[(i) \otimes (i|i-1|i)] &=& 0
\end{eqnarray*} 

On the sutured Khovanov side, the split map sends the generator whose trivial circle is labeled $w_+$ (see Case V2) to $\theta_i + \theta_{i-1}$ (see Case V1). Under the isomorphism $\Phi$ described in Cases V1 and V2, these maps agree.

{\flushleft ``$\leftarrow$":}

The only basis elements on the Khovanov-Seidel side with nontrivial image under $\cQ(\gamma_i)$ are the idempotents $(i-1)$ and $(i+1)$, both sent to \[(i-1|i) \otimes (i|i-1) = (i+1|i) \otimes (i|i+1) = (i) \otimes (i|i-1|i) \in \cQ(P_i \otimes {}_iP).\] On the sutured Khovanov side, the merge map sends the generators $\theta_{i-1}$ and $\theta_i$ to the generator whose single trivial circle has been labeled $w_-$  and whose nontrivial circles have all been labeled $v_+$. Again, these maps agree under the isomorphism $\Phi$.

{\flushleft {\bf Case E1a: $((P_{i} \otimes {}_iP) \otimes_{A_m} (P_i \otimes {}_iP)) \longleftrightarrow (P_i \otimes {}_i P)$}}

{\flushleft ``$\rightarrow$":} 

On the Khovanov-Seidel side, let ${}_iP_i$ denote $P_i \otimes_{A_m} {}_iP$ and recall from Case V2 that $\cQ(P_i \otimes {}_iP_i \otimes {}_iP)$ has basis given by $\{(i) \otimes a \otimes b\,\,|\,\, a, b \in \{(i), (i|i-1|i)\}\}$, and the map $\cQ(\beta_i)$ is given by multiplication of the last two factors:
\begin{eqnarray*}
\cQ(\beta_i)[(i) \otimes (i) \otimes (i)] &=& (i) \otimes (i)\\
\cQ(\beta_i)[(i) \otimes (i|i-1|i) \otimes (i)] = \cQ(\beta_i)[(i) \otimes (i) \otimes (i|i-1|i)] &=& (i) \otimes (i|i-1|i)\\
\cQ(\beta_i)[(i) \otimes (i|i-1|i) \otimes (i|i-1|i)] &=& 0.
\end{eqnarray*}
On the sutured Khovanov side, the map is given by multiplying the labels $w_{\pm}$ on the two trivial circles, which merge to form one. Now note that the isomorphism $\Phi$ identifies a generator on the Khovanov-Seidel side whose second or third tensor factor is labeled $(i)$ (resp., $(i|i-1|i)$) with a generator on the sutured Khovanov side whose first or second trivial circle is labeled $w_+$ (resp., $w_-$). Moreover, the multiplication (merge) map in both settings is the multiplication in $\F[x]/x^2$ via the identification of $(i) \leftrightarrow w_+$ with  $1$ and $(i|i-1|i) \leftrightarrow w_-$ with $x$ in $\F[x]/x^2$. The Khovanov-Seidel and sutured Khovanov maps therefore agree.

{\flushleft ``$\leftarrow$":} On the Khovanov-Seidel side, the map $\cQ(\gamma_i)$ is given by:
\begin{eqnarray*}
\cQ(\gamma_i)[(i) \otimes (i)] &=& (i) \otimes (i) \otimes (i|i-1|i) + (i) \otimes (i|i-1|i) \otimes (i)\\
\cQ(\gamma_i)[(i) \otimes (i|i-1|i)] &=& (i) \otimes (i|i-1|i) \otimes (i|i-1|i)
\end{eqnarray*}
which agrees with the split map on the sutured Khovanov side under the correspondence $\Phi$, as described above.

{\flushleft {\bf Case E1b: $((P_{i-1} \otimes {}_{i-1}P) \otimes_{A_m} (P_i \otimes {}_iP)) \longleftrightarrow (P_{i-1} \otimes {}_{i-1}P)$}}

{\flushleft ``$\rightarrow$":} On the Khovanov-Seidel side, again let ${}_{i-1}P_i$ denote ${}_{i-1}P \otimes_{A_m} P_i$ and recall from Case V3 that $\cQ(P_{i-1} \otimes {}_{i-1}P_i \otimes {}_{i}P)$ is generated by  $(i-1) \otimes (i-1|i) \otimes (i|i-1)$, and \[\cQ(\beta_{i})[(i-1) \otimes (i-1|i) \otimes (i|i-1)] = (i-1) \otimes (i-1|i-2|i-1).\] On the sutured Khovanov side, we have a single nontrivial circle splitting into one trivial and one nontrivial circle, and the split map sends the generator $v_+$ to the generator $v_+ \otimes w_-$. Under the isomorphism $\Phi$ from Cases V1 and V2, the two maps therefore agree.

{\flushleft ``$\leftarrow$":} On the Khovanov-Seidel side, we have 
\begin{eqnarray*}
	\cQ(\gamma_i)[(i-1) \otimes (i-1)] &=& (i-1) \otimes (i-1|i) \otimes (i|i-1)\\
	\cQ(\gamma_i)[(i-1) \otimes (i-1|i-2|i-1)] &=& 0.
\end{eqnarray*} This agrees, under the isomorphism $\Phi$, with the sutured Khovanov merge map, which sends the generator $w_+ \otimes v_+$ to the generator $v_+$ and the generator $w_- \otimes v_+$ to $0$.

\end{proof}
\begin{proposition} \label{prop:HHiscoinvar}
Let $\sigma= \sigma_{i_1}^\pm \cdots \sigma_{i_k}^\pm$ be a braid. Then \[HH_*(A_m, \cM_{\sigma}) \cong H_*(\cQ(\cM_{\sigma})).\]
\end{proposition}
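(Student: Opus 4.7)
The plan is to exploit the cube-of-resolutions structure of $\cM_\sigma$ described in Section~\ref{sec:KhS}: most vertices of this cube are projective bimodules over $A_m^e := A_m \otimes_\F A_m^{op}$, and for any projective $A_m^e$-module $P$ one has $HH_*(A_m,P) \cong \cQ(P)$ concentrated in Hochschild degree zero.

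First I would show that the elementary bimodule $P_i \otimes_\F {}_iP$ is projective as an $A_m^e$-module, being the direct summand of the free rank-one bimodule $A_m \otimes_\F A_m$ cut out by the idempotent $(i) \otimes (i) \in A_m^e$. More generally, any tensor product $\cN_{i_1} \otimes_{A_m} \cdots \otimes_{A_m} \cN_{i_k}$ involving at least one factor of the form $P_{i_j} \otimes_\F {}_{i_j}P$ simplifies to something of the form $P_{i_1} \otimes_\F V \otimes_\F {}_{i_n}P$ for some finite-dimensional vector space $V$, and hence remains a projective bimodule (a direct sum of summands of $A_m^e$ cut out by the idempotents $(i_1)\otimes(i_n)$). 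Thus every vertex of the cube-of-resolutions for $\cM_\sigma$ is a projective $A_m^e$-module with the sole exception of the ``all-$A_m$'' vertex.

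Next I would set up the bicomplex computing $HH_*(A_m,\cM_\sigma)$, whose rows are the Hochschild complexes of the individual vertex bimodules and whose columns are the cube-of-resolutions differentials, and analyze the spectral sequence obtained by filtering in the Hochschild direction. By the previous step, the $E_1$-page is concentrated in Hochschild degree zero at every vertex except the all-$A_m$ one, and the cube-direction differentials on this row assemble into precisely the differentials of $\cQ(\cM_\sigma)$, so the $E_2$-page along that row computes $H_*(\cQ(\cM_\sigma))$.

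The central technical step, which is also the main obstacle, is to show that the potentially non-trivial higher Hochschild homology supported at the all-$A_m$ vertex does not survive to the total homology of the bicomplex. I would handle this by replacing $A_m$ at that one vertex by a projective $A_m^e$-resolution $\mathcal{R}_\bullet \to A_m$ (for instance, the truncated bar resolution), yielding a complex $\tilde\cM_\sigma$ of projective bimodules quasi-isomorphic to $\cM_\sigma$ and hence satisfying $HH(A_m,\tilde\cM_\sigma) \cong H_*(\cQ(\tilde\cM_\sigma))$ by the first step of the plan. The proposition then reduces to verifying that the induced augmentation $\cQ(\tilde\cM_\sigma) \to \cQ(\cM_\sigma)$ is a quasi-isomorphism; this is equivalent to showing that the higher pieces of $\mathcal{R}_\bullet$, after passing to coinvariants, are killed by the cube-of-resolutions differentials into and out of the all-$A_m$ vertex coming from the Khovanov-Seidel structure maps $\beta_i$ and $\gamma_i$. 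Verifying this cancellation is the technical heart of the argument and uses the specific structure of the Khovanov-Seidel bimodules in an essential way.
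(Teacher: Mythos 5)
Your first two steps match the paper's argument exactly: every vertex of the cube other than the all-$A_m$ vertex is a projective $A_m^e$-module (a direct sum of summands $P_k \otimes {}_\ell P$ of $A_m \otimes A_m$), so its Hochschild homology is concentrated in degree zero and equals its coinvariant quotient, and the resulting spectral sequence of the bicomplex $\cR(A_m)\otimes_{A_m^e}\cM_\sigma$ reduces the proposition to controlling the all-$A_m$ vertex. The problem is that you then leave the decisive step unproved, and the route you sketch for it cannot work. What is actually needed is the intrinsic vanishing $HH_n(A_m,A_m)=0$ for all $n\neq 0$. This is a statement about the algebra $A_m$ alone: for the trivial braid word the cube has a single vertex, $\cM_\sigma = A_m$, and there are no differentials $\beta_i$ or $\gamma_i$ available to kill anything, so the proposition in that case \emph{is} the assertion $HH_{>0}(A_m,A_m)=0$. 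Even for nontrivial words, the classes in positive Hochschild degree at the all-$A_m$ vertex cannot be cancelled by the $d_1$ (cube) differential, since the adjacent vertices are projective and carry nothing in positive Hochschild degree; one would have to rely on higher differentials, and your proposal gives no mechanism for that. So the ``technical heart'' you defer is not a cancellation phenomenon driven by the Khovanov--Seidel structure maps; it is a genuine Hochschild homology computation for $A_m$, and without it the proof is incomplete.

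The paper handles this step by a quite different argument, which is where most of the work lies. One transports the computation through the derived equivalences relating $A_m$ to the dg algebra $B_m = \bigoplus_{i,j}{}_iQ_j$ and to its (formal) homology $B_m^{Kh}$, using Lemma~\ref{lem:HHTensor} together with $Q\,\widetilde{\otimes}_{B_m}Q^* \cong A_m$ to conclude $HH(A_m,A_m)\cong HH(B_m,B_m)\cong HH(B_m^{Kh},B_m^{Kh})$. Because $B_m^{Kh}$ is \emph{directed} (lower-triangular matrices over $\F[x]/x^2$ with diagonal entries in $\F$), the composable cyclic bar complex collapses onto the diagonal idempotent pieces and one computes $HH(B_m^{Kh},B_m^{Kh})\cong \F^{m+1}$ concentrated in degree $0$. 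A dimension count against $\cQ(A_m)\cong\F^{m+1}$ then forces $HH_n(A_m,A_m)=0$ for $n\neq 0$. You would need to supply this (or some equivalent computation, e.g.\ via an explicit small projective bimodule resolution of $A_m$) to complete your argument.
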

\begin{proof}
Given any resolution
\[\xymatrix{\mathcal{R}(A_m) \ar[r]^-{\partial} & A_m} := \xymatrix{(\cdots \ar[r]^-{\partial_2} R_2 & R_1 \ar[r]^-{\partial_1} & R_0) \ar[r]^-{\partial} & A_m}\] of $A_m$ by projective right $A_m^e := A_m \otimes A_m^{op}$ modules, $HH(A_m, \cM_\sigma)$ is, by Definition \ref{defn:HochHomClassical}, the homology of the complex $\mathcal{R}(A_m) \otimes_{A_m^e} \mathcal{M}_\sigma.$

Note furthermore that $\mathcal{R}(A_m) \otimes_{A_m^e} \cM_\sigma$ has the structure of a double complex, whose ``horizontal" differentials are of the form $\partial_* \otimes_{A_m^e} \Id$, where $\partial_*$ is the differential in $\mathcal{R}(A_m)$ and whose ``vertical" differentials are of the form $\Id \otimes_{A_m^e} d_*$, where $d_*$ is the internal differential in the complex $\mathcal{M}_{\sigma}$. For example, the double complex for $\mathcal{M}_{\sigma_i^+}$ looks like:
\[\xymatrix{\cdots R_2 \otimes_{A_m^e} (P_i \otimes {}_iP)\{1\}\ar[r]^-{\partial_2 \otimes \Id} \ar[d]^-{\Id \otimes \beta_i}  & R_1 \otimes_{A_m^e} (P_i \otimes {}_iP)\{1\} \ar[r]^-{\partial_1 \otimes \Id} \ar[d]^-{\Id \otimes \beta_i} & R_0 \otimes_{A_m^e} (P_i \otimes {}_iP)\{1\}\ar[d]^-{\Id \otimes \beta_i} \\
\cdots R_2 \otimes_{A_m^e} A_m\{1\} \ar[r]^-{\partial_2 \otimes \Id} & R_1 \otimes_{A_m^e} A_m\{1\} \ar[r]^-{\partial_1 \otimes \Id} & R_0 \otimes_{A_m^e} A_m\{1\}}\]

Let $d_h:= \partial_* \otimes_{A_m^e} \Id$ (resp., $d_v := \Id \otimes_{A_m^e} d_*$) denote the horizontal (resp., vertical) differential on the complex $\mathcal{C}_\sigma := \mathcal{R}(A_m) \otimes_{A_m^e} \mathcal{M}_\sigma$. There is a corresponding spectral sequence converging to \[HH(A_m, \mathcal{M}_\sigma) := H_*(\mathcal{C}_\sigma, d_h + d_v)\] whose $E^2$ term is \[H_*(H_*(\mathcal{C}_\sigma, d_h), d_v).\]

Moreover, by choosing the bar resolution:
\[\cR(A_m) \rightarrow A_m := \xymatrix{(\cdots A_m^{\otimes 4} \ar[r] & A_m^{\otimes 3} \ar[r] & A_m^{\otimes 2}) \ar[r] & A_m}\]
we see that $\cQ(\cM_{\sigma})$ is precisely the chain complex whose underlying vector space is $H_0(\mathcal{C}_\sigma, d_h)$ and whose differential is the induced differential, $d_v$, on the quotient. Hence: \[H_*(H_0(\mathcal{C}_\sigma, d_h), d_v) \cong H_*(\cQ(\cM_{\sigma}))\]

Now we claim that $H_n(\mathcal{C}_\sigma, d_h) = 0$ for all $n \neq 0$. Since the induced differential on the $E^2$ page must have a nonzero horizontal degree shift, the claim implies the desired result: \[HH(A_m, \cM_\sigma) \cong H_*(\cQ(\cM_\sigma)),\] since the spectral sequence must therefore collapse at the $E^2$ stage.

To prove the claim, observe first that the chain complex $(\mathcal{C}_\sigma, d_h)$ splits as a direct sum of chain complexes, one for each vertex of the cube of resolutions for $\cM_\sigma$. Accordingly, each subcomplex is of the form:
\[\cR(A_m)  \otimes_{A_m^e} (\cN_{i_1} \otimes_{A_m} \ldots \otimes_{A_m} \cN_{i_k}),\] where \[\cN_{i_j} := \left\{\begin{array}{c} A_m\\P_{i_j} \otimes {}_{i_j}P\end{array}\right.\] depending on the vertex of the cube; hence, the homology of each such subcomplex is \[HH(A_m, \cN_{i_1} \otimes_{A_m} \ldots \otimes_{A_m} \cN_{i_k}).\]

Now note that $P_{k} \otimes {}_{\ell}P$ is a projective $A_m^e$ module for any $k, \ell \in \{0, \ldots, m\}$, since \[A_m \otimes A_m = \bigoplus_{k,\ell = 0}^m P_k \otimes {}_\ell P.\] Since the tensor product functor is exact on projective modules, the Hochschild homology of a projective bimodule is concentrated in degree $0$. We conclude that whenever at least one of the tensor factors $\cN_{i_j}$ is of the form $P_{i_j} \otimes {}_{i_j} P$, we have \[H_n(\cR(A_m) \otimes_{A_m^e} (\cN_{i_1} \otimes_{A_m} \ldots \otimes_{A_m} \cN_{i_k})) = 0\] for $n \neq 0$, as desired.

The only remaining computation is $HH(A_m, \cN_{i_1} \otimes_{A_m} \ldots \otimes_{A_m} \cN_{i_k})$ in the case $\cN_{i_1} = \ldots = \cN_{i_k} = A_m$, i.e., $HH(A_m, A_m)$.

Rather than computing $HH(A_m, A_m)$ directly, we will exploit a relationship between $A_m$ and another dg algebra, which we will call $B_m$, whose Hochschild homology is easy to compute for algebraic reasons (its homology is {\em directed}). The motivation here comes from the geometric interpretation of the algebras $A_m$ and $B_m$ in terms of the Fukaya category of a certain Lefschetz fibration, described in \cite{GT10014323}, \cite[Chp. 20]{SeidelBook}, and recalled briefly in \cite[Sec. 3.5]{QuiverAlgebras}.

The definition of the dg algebra $B_m$ can (and will) be given combinatorially, but symplectic geometers would do well to keep the following description in mind. Let \[S=\{(x,y,z)\in\mathbb{C}^3\,|\,x^2+y^2=p(z)\},\] where $p$ is a polynomial of degree $m+1$ whose roots are exactly the points
${\bf 0}, \ldots, {\bf m}$ pictured in Figure \ref{fig:Qbasis}. Then projection to the $z$
coordinate gives a Lefschetz fibration $\pi:S\to \mathbb{C}$, for which the arcs
$p_1,\dots,p_m$ pictured in Figure \ref{fig:Qbasis} are matching paths and lift to Lagrangian
spheres $P_1^S,\dots,P_m^S\subset S$. Meanwhile, the vanishing paths
$p_0,q_0,q_1,\dots,q_m$ determine Lefschetz thimbles $P_0^S,Q_0^S,\dots,Q_m^S$.
These are respectively spherical and exceptional objects of the Fukaya
category $\mathcal{F}(\pi)$ \cite{SeidelBook}, in which
$\mathrm{End}(P_0^S\oplus \dots\oplus P_m^S)\simeq A_m$, whereas
$\mathrm{End}(Q_0^S\oplus\dots\oplus Q_m^S)\simeq B_m^{Kh}$.
The construction carried out here is equivalent to expressing the elements
of the exceptional collection $Q_0^S,\dots,Q_m^S$ as twisted complexes
built out of the generators $P_0^S,\dots,P_m^S$. See also \cite[Sec. 3]{QuiverAlgebras}. Note that in \cite{QuiverAlgebras}, the subscript $m$ was dropped from the notation for $B_m$, $B_m^{Kh}$, and $B_m^{HF}$.

\begin{figure}
\begin{center}
\resizebox{3in}{!}{\input{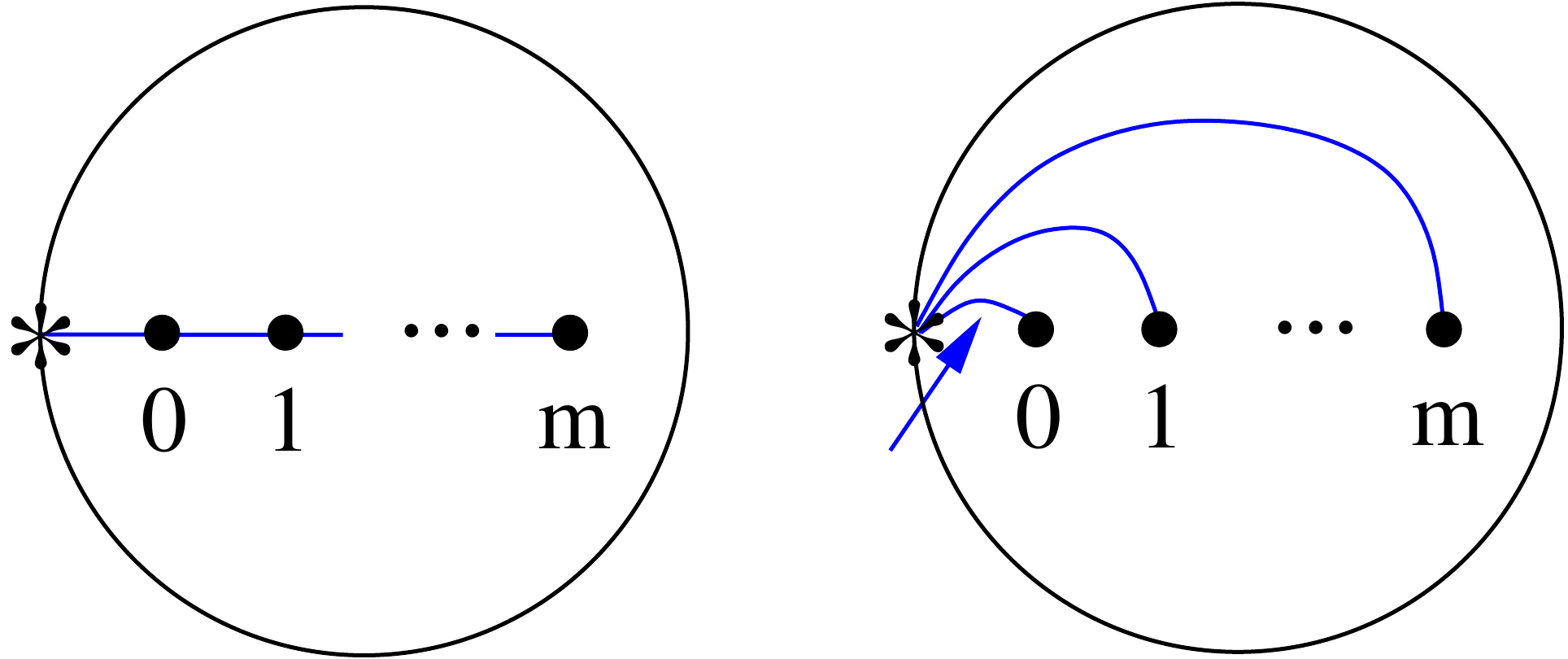_t}}
\end{center}
\caption{The matching and vanishing paths $p_i$ and $q_i$ in the base of the Lefschetz fibration $\pi:S\to\C$.}
\label{fig:Qbasis}
\end{figure}

The dg algebra $B_m$ can be described combinatorially by: \[B_m := \bigoplus_{i,j=0}^m \mbox{Hom}_{A_m}(Q_i, Q_j) = \bigoplus_{i,j=0}^m {}_iQ_j,\] where \[Q_i := \xymatrix{P_0 \ar[r]^-{\cdot (0|1)} & P_1 \ar[r]^-{\cdot (1|2)} & \ldots \ar[r]^-{\cdot (i-1|i)} & P_i},\] and
 
\[{}_iQ_j := \vcenter{\xymatrix{{}_0P_0 \ar[r]^-{\cdot (0|1)} & {}_0P_1 \ar[r]^-{\cdot (1|2)} & \ldots \ar[r]^-{\cdot (j-1|j)} & {}_0P_j \\
				{}_1P_0 \ar[r] \ar[u]^-{(0|1) \cdot} & {}_1P_1 \ar[r] \ar[u] & \ldots \ar[r] \ar[u] & {}_1P_j \ar[u]\\
				\cdots \ar[u]^-{(1|2) \cdot} \ar[r] & \cdots \ar[u] \ar[r] & \cdots \ar[u] \ar[r] & \cdots \ar[u] \\
				{}_iP_0 \ar[r]  \ar[u]^-{(i-1|i) \cdot} & {}_iP_1 \ar[r] \ar[u] & \ldots \ar[r] \ar[u] & {}_iP_j\ar[u] }}.\]
\vskip 10pt				
In the above, ${}_iP_j:= \mbox{Hom}_{A_m}(P_i,P_j)$ denotes the subspace of $A_m$ generated by paths beginning at $i$ and ending at $j$, and the horizontal (resp., vertical) maps are given by right (resp., left) multiplication by the appropriate length-one path, as indicated in the first row (resp., column). 

Let $B_m^{Kh} := H_*(B_m)$. It is shown in \cite[Lem. 3.12]{QuiverAlgebras} that $B_m^{Kh}$ is isomorphic to the subalgebra of {\em lower triangular} $(m+1) \times (m+1)$ matrices over $H^*(S^1;\F) \cong \F[x]/x^2$ with diagonal entries in $\F$ (\cite[Rmk. 3.13]{QuiverAlgebras}).

We now use the category equivalences: \[\xymatrix@C=6pc@R=3pc{D_\infty(A_m-A_m^{op}) \ar@/^1pc/[r]^-{\mathcal{F}^e} & D_\infty(B_m-B_m^{op})\ar@/^1pc/[l]^-{\mathcal{G}^e} \ar@/^1pc/[r]^-{\mbox{Induct}_\phi} & D_\infty(B_m^{Kh}-(B_m^{Kh})^{op}) \ar@/^1pc/[l]^-{\mbox{Rest}_\phi}}\] provided by the proof of \cite[Prop. 3.15]{QuiverAlgebras} and the $A_\infty$ quasi-isomorphism $\phi: B_m \rightarrow B_m^{Kh}$ guaranteed by \cite[Lem. 3.12]{QuiverAlgebras}.

Explicitly, the equivalence \[\cF^e: D_\infty(A_m-A_m^{op}) \rightarrow D_\infty(B_m-B_m^{op})\] is given by \[\cF^e(M) := Q^*\widetilde{\otimes}_{A_m} M \widetilde{\otimes}_{A_m} Q,\footnote{The ordinary tensor product agrees with the $A_\infty$ tensor product here, since $Q$ (resp., $Q^*$) is a complex of projective left (resp., right) modules over $A_m$.}\] where $Q := \bigoplus_{i = 0}^m Q_i$,  and $Q^*:= \bigoplus_{i=0}^m \mbox{Hom}_{A_m}(Q_i, A_m) = \bigoplus_{i=0}^m {}_iQ,$ with \[{}_iQ := \xymatrix{{}_0P & \ar[l]_-{(0|1)\cdot} {}_1P & \ar[l]_-{(1|2)\cdot} \cdots & \ar[l]_-{(i-1|i)\cdot} {}_iP},\] and the equivalence 
\[\mbox{Induct}_\phi: D_\infty(B_m - B_m^{op}) \rightarrow D_\infty(B_m^{Kh}-(B_m^{Kh})^{op})\] is given by \[\mbox{Induct}_\phi(M) := B_m^{Kh} \widetilde{\otimes}_{B_m} \,\,M\,\, \widetilde{\otimes}_{B_m} B_m^{Kh},\] where the right/left $B_m$--module structure on $B_m^{Kh}$ above is obtained in the natural way by using the quasi-isomorphism $\phi: B_m \rightarrow B_m^{Kh}$. 

The proof of \cite[Prop. 3.15]{QuiverAlgebras} tells us that \[Q \widetilde{\otimes}_{B_m} Q^* = Q \otimes_{B_m} Q^* = A_m \in D_\infty(A_m - A_m^{op}),\] so an application of Lemma \ref{lem:HHTensor} and the observation that $\cF^e(A_m) = B_m$ implies that $HH(A_m, A_m) \cong HH(B_m,B_m)$.

Similarly, the formality of $B_m$ implies that \[B_m^{Kh} \widetilde{\otimes}_{B_m^{Kh}} B_m^{Kh} = B_m^{Kh} \otimes_{B_m^{Kh}} B_m^{Kh} = B_m \in D_\infty(B_m-B_m^{op})\]
(where the first tensor factor is viewed as a $B-B^{Kh}$ bimodule and the second as a $B_m^{Kh}-B_m$ bimodule), so another application of Lemma \ref{lem:HHTensor} and the observation that $\mbox{Induct}_\phi(B_m) = B_m^{Kh}$ implies that $HH(B_m, B_m) \cong HH(B_m^{Kh},B_m^{Kh}).$

It remains to compute $HH(B_m^{Kh},B_m^{Kh})$. Since $B_m^{Kh}$ is an associative algebra, we use the classical definition of Hochschild homology given in Definition \ref{defn:HochHomClassical}.  
Accordingly, we choose the ``composable" (projective) bar resolution of $B_m^{Kh}$ as a left $(B_m^{Kh})^e := B_m^{Kh} \otimes (B_m^{Kh})^{op}$ module: 
\[\xymatrix{\cdots \ar[r] & \bigoplus_{i_1, \ldots, i_4} \left({}_{i_1}Q^{Kh}_{i_2} \otimes {}_{i_2}Q^{Kh}_{i_3} \otimes {}_{i_3}Q^{Kh}_{i_4}\right) \ar[r] & \bigoplus_{i_1, i_2, i_3} \left({}_{i_1}Q^{Kh}_{i_2} \otimes {}_{i_2}Q^{Kh}_{i_3}\right)},\] which, after tensoring over $(B_m^{Kh})^e$ with $B_m^{Kh}$, yields the ``composable" cyclic bar complex:
\[\xymatrix{\cdots \ar[r]^-{\partial_3} & \bigoplus_{i_1, i_2, i_3} \left({}_{i_1}Q^{Kh}_{i_2} \otimes {}_{i_2}Q^{Kh}_{i_3} \otimes {}_{i_3}Q^{Kh}_{i_1}\right) \ar[r]^-{\partial_2} & \bigoplus_{i_1, i_2} \left({}_{i_1}Q^{Kh}_{i_2} \otimes {}_{i_2}Q^{Kh}_{i_1}\right) \ar[r]^-{\partial_1} & \bigoplus_{i} {}_iQ_i}\] where, as usual, \[\partial_k(b_1 \otimes \ldots \otimes b_{k+1}) := (b_1b_2 \otimes \ldots \otimes b_{k+1}) + (b_1 \otimes b_2b_3 \otimes \ldots \otimes b_{k+1}) +  \ldots + (b_2 \otimes \ldots \otimes b_{k+1}b_1).\]

But, as noted previously, $B_m^{Kh}$ is {\em directed}:
\[{}_iQ^{Kh}_j := H_*(\mbox{Hom}_{A_m}(Q_i,Q_j)) = 0 \mbox{ whenever $i < j$},\] so the composable cyclic bar complex reduces to:

\[\xymatrix{\cdots \ar[r]^-{\partial_3} & \bigoplus_{i} \left({}_iQ^{Kh}_i\right)^{\otimes 3} \ar[r]^-{\partial_2} & \bigoplus_{i} \left({}_iQ^{Kh}_i\right)^{\otimes 2} \ar[r]^-{\partial_1} & \bigoplus_{i} {}_iQ^{Kh}_i.}\]

Moreover, for each $i \in \{0, \ldots, m\}$ and $k \in \Z^+$, $({}_iQ^{Kh}_i)^{\otimes k}$ is $1$--dimensional, spanned by the tensor product, $\left({}_i\Id_i\right)^{\otimes k}$, of $k$ copies of the idempotent ${}_i\Id_i \in {}_iQ_i^{Kh}$. Therefore, the restricted map \[\partial_k: \left({}_iQ_i^{Kh}\right)^{\otimes k+1} \rightarrow \left({}_iQ_i^{Kh}\right)^{\otimes k}\] is given by \[\partial_k({}_i\Id_i \otimes \ldots \otimes {}_i\Id_i)= \left\{\begin{array}{cl}
					0 & \mbox{if $k$ is odd, and}\\
					\left({}_i\Id_i\right)^{\otimes k} & \mbox{if $k$ is even.}\end{array}\right.\]

It follows that \[HH(B_m^{Kh},B_m^{Kh}) \cong HH_0(B_m^{Kh}, B_m^{Kh}) \cong \bigoplus_{i=0}^m {}_iQ_i^{Kh}\] has dimension $m+1$. But we calculated in Case V1 of the proof of Proposition \ref{prop:SKhiscoinvar} that $HH_0(A_m, A_m) = \cQ(A_m)$ also has dimension $m+1$. Since \[m+1 = \mbox{dim}(HH_0(A_m,A_m)) \leq \mbox{dim}(HH(A_m,A_m)) = \mbox{dim}(HH(B_m^{Kh},B_m^{Kh}))= m+1,\] it follows that $HH_n(A_m,A_m) = 0$ unless $n=0$, as desired.
\end{proof}

\section{Ozsv{\'a}th-Szab{\'o} spectral sequence}
Theorem \ref{thm:SKhisHH}, when combined with  \cite[Thm. 6.1]{QuiverAlgebras} and Proposition \ref{prop:HHbordsut}, a  generalization of \cite[Thm. 14]{BorderedBimodules}, implies Theorem \ref{thm:OzSzspecseq}, a (portion of the) result originally obtained in \cite{GT07060741} and \cite{AnnularLinks} by adapting Ozsv{\'a}th-Szab{\'o}'s argument from \cite{MR2141852} to Juh{\'a}sz's sutured Floer homology setting.

As before, $\sigma \in \Braid_{m+1}$ denotes a braid, and $\widehat{\sigma} \subset A \times I$ denotes its annular closure (with $A \times I$ identified in the standard way with the complement of the braid axis, $K_B$), and $m(\widehat{\sigma})$ its mirror. In the following, 
\begin{enumerate}
\item $SFH(\boldSigma(\widehat{\sigma}))$ denotes the sutured Floer homology (\cite{MR2253454}) of the double cover, $\boldSigma(\widehat{\sigma})$, of the product sutured manifold $A \times I$, branched along $\widehat{\sigma} \subset A \times I$, as described in \cite{AnnularLinks},
\item $\widehat{HFK}(\boldSigma(\widehat{\sigma}),\widetilde{K}_B)$ denotes the knot Floer homology (\cite{MR2065507},\cite{GT0306378}) of the preimage, $\widetilde{K}_B \subset \boldSigma(\widehat{\sigma})$, of $K_B$, and
\item $V = \F_{\left(\frac{1}{2},0\right)} \oplus \F_{\left(-\frac{1}{2},-1\right)}$ is a ``standard" $2$--dimensional bigraded vector space  (the subscripts on the summands indicate their (Alexander, Maslov) bigrading).
\end{enumerate}

\begin{theorem} \label{thm:OzSzspecseq} Let $\sigma \in \Braid_{m+1}$. There exists a filtered chain complex whose associated graded homology is isomorphic to $\SKh(m(\widehat{\sigma});m-1)$ and whose total homology is isomorphic to the ``next-to-bottom" Alexander grading of 
\[SFH(\boldSigma(\widehat{\sigma})) \cong \left\{\begin{array}{cl}
	\widehat{HFK}(\boldSigma(\widehat{\sigma}), \widetilde{K}_B) & \mbox{when $m+1$ is even, and}\\
	\widehat{HFK}(\boldSigma(\widehat{\sigma}), \widetilde{K}_B) \otimes V & \mbox{when $m+1$ is odd,}\end{array}\right.\]
where $\widetilde{K}_B$ denotes the preimage of the braid axis, $K_B$, in $\boldSigma(\widehat{\sigma})$, the double cover of $S^3$ branched along $\widehat{\sigma}$.
\end{theorem}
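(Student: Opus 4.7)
The plan is to chain together three inputs. The first is Theorem~\ref{thm:SKhisHH}, already in hand, identifying $\SKh(m(\widehat{\sigma}); m-1)$ with $HH(A_m, \cM_\sigma)$ up to the grading shifts recorded in Section~\ref{sec:MainThm}. The second is \cite[Thm.~6.1]{QuiverAlgebras}, which realizes the Khovanov-Seidel bimodule $\cM_\sigma$ as the associated graded of a filtered $A_\infty$ bimodule $\widetilde{\cM}_\sigma$ quasi-isomorphic to a bordered Heegaard-Floer type bimodule attached to the mapping class of $\sigma$ on the marked disk $(D_m,\Delta)$. The third is Proposition~\ref{prop:HHbordsut}, the generalization of \cite[Thm.~14]{BorderedBimodules}, which identifies the Hochschild homology of such a bordered mapping class bimodule with the sutured Floer homology of the corresponding mapping-torus sutured three-manifold; in our situation the relevant sutured three-manifold is the branched double cover $\boldSigma(\widehat{\sigma})$ with the natural sutured structure lifted from $A \times I$.

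First I would form the Hochschild complex of the filtered bimodule $\widetilde{\cM}_\sigma$ furnished by \cite[Thm.~6.1]{QuiverAlgebras}. The bimodule filtration descends term-by-term to a filtration on the Hochschild complex, producing a filtered chain complex $C$ with an associated spectral sequence. By \cite[Thm.~6.1]{QuiverAlgebras} together with Theorem~\ref{thm:SKhisHH}, the associated graded homology (the $E_1$-page) is $HH(A_m,\cM_\sigma) \cong \SKh(m(\widehat{\sigma}); m-1)$, giving the first half of the statement. By Proposition~\ref{prop:HHbordsut}, the total homology $H_\ast(C)$ is the sutured Floer homology $SFH(\boldSigma(\widehat{\sigma}))$ in the Alexander grading singled out by the filtration grading on $\widetilde{\cM}_\sigma$. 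To finish, I would invoke the standard identification of sutured Floer homology of a knot-complement sutured manifold with knot Floer homology to obtain $SFH(\boldSigma(\widehat{\sigma})) \cong \widehat{HFK}(\boldSigma(\widehat{\sigma}),\widetilde{K}_B)$ when $m+1$ is even and the same tensored with $V$ when $m+1$ is odd, the $\otimes V$ correction arising precisely because $\widetilde{K}_B$ has two components rather than one in that case.

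The main obstacle, I expect, is grading bookkeeping: reconciling the filtration grading convention on $\widetilde{\cM}_\sigma$ from \cite{QuiverAlgebras} with both the $f$-grading of Section~\ref{sec:SKh} and the Alexander grading on $\widehat{HFK}$, and checking that the ``next-to-top'' level $f = m-1$ on the sutured Khovanov side matches the ``next-to-bottom'' Alexander grading on the knot Floer side (the reversal coming from mirroring, cf.\ Theorem~\ref{thm:SKhisHH}). A secondary challenge is Proposition~\ref{prop:HHbordsut} itself: producing the Khovanov-Seidel analogue of \cite[Thm.~14]{BorderedBimodules} in the generality required appears to rely on Zarev's bordered sutured machinery \cite{GT09081106}, as the acknowledgments indicate. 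Once these are in place, the filtered complex $C$ is the desired complex, and its spectral sequence recovers (in the next-to-top sutured Khovanov grading) the Ozsv{\'a}th-Szab{\'o}-type result originally obtained in \cite{GT07060741, AnnularLinks}.
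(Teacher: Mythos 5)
Your proposal follows essentially the same route as the paper: filter the Hochschild complex of the bordered Floer bimodule furnished by \cite[Thm.~6.1]{QuiverAlgebras}, identify the associated graded homology with $HH(A_m,\cM_\sigma)\cong \SKh(m(\widehat{\sigma});m-1)$ via Theorem~\ref{thm:SKhisHH}, and identify the total homology with the next-to-bottom Alexander grading of $SFH(\boldSigma(\widehat{\sigma}))$ via Proposition~\ref{prop:HHbordsut} applied to the branched double cover of $D_m\times I$. The only step you elide is that the associated graded of the filtered bimodule is $\cM_\sigma^{Kh}$ over $B_m^{Kh}$, not $\cM_\sigma$ over $A_m$ itself, so one additional pass through the derived equivalences (two applications of Lemma~\ref{lem:HHTensor}, as in the proof of Proposition~\ref{prop:HHiscoinvar}) is needed to identify $HH(B_m^{Kh},\cM_\sigma^{Kh})$ with $HH(A_m,\cM_\sigma)$ --- which is exactly how the paper concludes.
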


In particular, there exists a spectral sequence from the next-to-top filtration grading of $\SKh(m(\widehat{\sigma}))$ to the next-to-bottom Alexander grading of $\mbox{SFH}(\boldSigma(\widehat{\sigma}))$.

\begin{remark} We have chosen to match the Alexander grading normalization conventions of \cite{MR2253454} rather than \cite{CombHFK} in the statement of Theorem \ref{thm:OzSzspecseq}, since these fit more naturally with Zarev's conventions in \cite{GT09081106}; we made the opposite choice in  \cite[Sec. 7]{QuiverAlgebras}.  Note also that in \cite[Sec. 7]{QuiverAlgebras} we asserted an isomorphism between $HH(B_m^{HF},\cM_\sigma^{HF})$ and the {\em next-to-top} (rather than {\em next-to-bottom}, as asserted here) Alexander grading of $\widehat{HFK}(\boldSigma(\widehat{\sigma}), \widetilde{K}_B)$. The two assertions are equivalent, because of the symmetry of knot Floer homology described in \cite[Sec. 3.5]{MR2065507}.
\end{remark}

We begin by stating the following generalization of \cite[Thm. 14]{BorderedBimodules} (replacing the pointed matched circles of \cite{GT08100687} with the more general non-degenerate arc diagrams of \cite[Defn. 2.1]{GT09081106}), which we need for the proof of Theorem \ref{thm:OzSzspecseq}. Its proof follows quickly from Theorems 1 and 2 of Zarev's \cite{GT09081106}:

\begin{proposition} \label{prop:HHbordsut} Let $(Y,\Gamma,\cZ, \phi)$ be a bordered sutured manifold in the sense of \cite[Defn. 3.5]{GT09081106}, with $\cZ = -\cZ_1 \amalg \cZ_2$. If there is a diffeomorphism $\Psi: G(-\cZ_1) \rightarrow G(\cZ_2)$ for which $\Psi|_{{\bf Z}_1}: {\bf Z}_1 \rightarrow {\bf Z}_2$ is orientation-preserving, then there is an identification of the Hochschild homology of the $A_\infty$ bimodule \[{}_{\cA(\cZ_2)}\widehat{BSDA}_M(Y,\Gamma)_{\cA(\cZ_2)}\] (whose construction is described in \cite[Sec. 8.4]{GT09081106}) with the sutured Floer homology of  $(Y', \Gamma')$, the sutured manifold obtained by self-gluing $(Y,\Gamma,\cZ,\phi)$ along $F(-\cZ_1) \sim_\Psi F(\cZ_2)$ as described in \cite[Sec. 3.3]{GT09081106}:

\[SFH(Y',\Gamma') \cong HH(\cA(\cZ_2), {}_{\cA(\cZ_2)}\widehat{BSDA}_M(Y,\Gamma)_{\cA(\cZ_2)}).\]

Moreover, the isomorphism appropriately identifies the ``moving-strands" grading of $\cA(\cZ_2)$ with the ``Alexander" grading of $SFH(Y',\Gamma')$ relative to $F(\cZ_2)$.
\end{proposition}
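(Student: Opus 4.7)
The plan is to deduce the proposition by combining Zarev's invariance and pairing theorems for bordered sutured Floer homology with the standard categorical principle that self-gluing of a bimodule computes Hochschild homology. First I would recall that to the bordered sutured manifold $(Y,\Gamma,\cZ,\phi)$ with $\cZ = -\cZ_1 \amalg \cZ_2$, Zarev's invariance theorem associates a well-defined (up to $A_\infty$ quasi-isomorphism) type--$DA$ bimodule $\widehat{BSDA}(Y,\Gamma)$ over the pair of dg algebras $\cA(\cZ_1)$--$\cA(\cZ_2)$. The diffeomorphism $\Psi\colon G(-\cZ_1) \to G(\cZ_2)$, whose restriction to ${\bf Z}_1$ is orientation preserving, canonically induces an isomorphism of dg algebras $\cA(\cZ_1) \cong \cA(\cZ_2)$; transporting the left action across this isomorphism yields the $\cA(\cZ_2)$--$\cA(\cZ_2)$ bimodule $M := {}_{\cA(\cZ_2)}\widehat{BSDA}_M(Y,\Gamma)_{\cA(\cZ_2)}$ appearing in the statement.

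Next I would invoke Zarev's pairing theorem applied to a factorization of the topological self-gluing. The standard device for converting a self-gluing into an algebraic self-tensor product is to realize $(Y',\Gamma')$ in two steps: first glue $(Y,\Gamma)$ to the trivial identity cobordism $F(\cZ_2) \times I$ (with appropriate sutures) along the $\cZ_2$ boundary, then glue the resulting three-boundary piece to itself along the remaining pair of boundary components. Iterating the pairing theorem for the two-manifold gluing, together with an application of Lemma \ref{lem:HHTensor} to rewrite the resulting two-sided tensor product cyclically, converts the topological self-gluing into the self-tensor product of $M$ over $\cA(\cZ_2) \otimes \cA(\cZ_2)^{op}$, which by Definition \ref{defn:HochHomAinfty} is precisely the complex computing $HH(\cA(\cZ_2),M)$.

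For the grading statement, Zarev equips $\cA(\cZ_2)$ with a ``moving-strands'' grading that counts the net number of strands crossing an arc of the diagram, and his pairing theorem identifies this with the relative $\mathrm{Spin}^c$ (Alexander) grading of $SFH$ measured against the image of $F(\cZ_2)$ in $Y'$. Since the Hochschild differential acts by cyclic rotation and interior $A_\infty$ operations (both of which preserve total moving-strands weight), the identification descends to the claimed graded isomorphism.

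The main obstacle I expect is verifying that the self-gluing actually produces the Hochschild complex at the chain level rather than merely in some derived sense; this is the step that, in the pointed matched circle case of \cite[Thm. 7]{BorderedBimodules}, is carried out by an explicit Heegaard triangle count exhibiting the cyclic identification of generators. The same strategy should go through in Zarev's setting of non-degenerate arc diagrams, but one must check carefully that the orientation-preserving hypothesis on $\Psi|_{{\bf Z}_1}$ is exactly what is needed to intertwine the antiinvolution of $\cA(\cZ_2)$ used to convert left modules into right modules, so that the resulting bimodule $M$ really is compatible with the Hochschild differential rather than with some twisted variant.
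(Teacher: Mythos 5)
Your overall strategy is the paper's: insert the identity cobordism $\mathrm{id}_{\cZ_2} = (F(\cZ_2)\times I, \Lambda\times I)$, invoke Zarev's pairing theorem, and recognize the resulting algebraic self-tensor product as the Hochschild complex of Definition \ref{defn:HochHomAinfty}. But the topological factorization you describe does not close the argument: after gluing $(Y,\Gamma)$ to $F(\cZ_2)\times I$ along the $\cZ_2$ boundary, the remaining step is still a \emph{self}-gluing of a single manifold along a pair of its own boundary components, which is exactly the operation the two-manifold pairing theorem does not cover; iterating it just reproduces the original problem. The correct move, and the one the paper makes, is to perform both gluings in a \emph{single} application of Zarev's Theorem 1 by treating $\cZ = -\cZ_1 \amalg \cZ_2$ as one disconnected arc diagram: one pairs the one-sided module $\widehat{BSA}(Y,\Gamma,-\cZ_1\amalg\cZ_2,k,k)$ (which, by Zarev's Definition 8.3, is quasi-isomorphic to the image of the bimodule under the equivalence $\cG: D_\infty(\cA_k-\cA_k^{op})\to D_\infty(\cA_k\otimes\cA_k^{op})$) against $\widehat{BSD}(\mathrm{id}_{\cZ_2},k,k)$, and uses Zarev's Theorem 2 to identify the bimodule of the identity cobordism with the algebra $\cA(\cZ_2)$ itself, so that the box tensor product is literally the derived tensor product $\cG(\widehat{BSDA}_k(Y))\,\widetilde{\otimes}_{\cA_k^{op}\otimes\cA_k}\,\cA_k$ computing Hochschild homology. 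No Heegaard triangle count is needed; the chain-level identification is purely algebraic once these quasi-isomorphisms are in place.

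Two smaller points. First, the paper organizes everything through the direct sum decomposition $\cA(\cZ_2)=\bigoplus_k \cA(\cZ_2,k)$ into $k$-moving-strand summands, with a corresponding splitting of the bimodule; this is not cosmetic, since it is what makes the grading statement precise: the $k$-strand summand of the Hochschild homology is identified with the summand $SFH(Y',\Gamma';k)$ of Spin$^c$ structures satisfying $\langle c_1(\mathfrak{s}),[F(\cZ_2)]\rangle = \chi(F(\cZ_2)) - |{\bf Z}_2| + 2k$. Your appeal to ``the Hochschild differential preserves total moving-strands weight'' gestures at this but does not produce the identification of the two gradings. Second, your worry about the antiinvolution and the orientation-preserving hypothesis on $\Psi|_{{\bf Z}_1}$ is reasonable but is absorbed into the hypotheses under which Zarev's self-gluing setup applies; the paper does not need a separate verification there.
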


See the end of the proof for an explicit identification of these two gradings.

\begin{proof}[Proof of Proposition \ref{prop:HHbordsut}]
Let $m$ denote the number of arcs in $\cZ_2$. Then $\cA(\cZ_2)$ splits as a direct sum:
\[\cA(\cZ_2) := \bigoplus_{k=0}^m\cA(\cZ_2,k),\] over $k$--moving strands algebras as in \cite[Sec. 2.2]{GT09081106}, and there is a corresponding decomposition of the bimodule:
\[{}_{\cA(\cZ_2)}\widehat{BSDA}_M(Y,\Gamma)_{\cA(\cZ_2)} := \bigoplus_{k=0}^m {}_{\cA(\cZ_2,k)}\widehat{BSDA}_M(Y,\Gamma,k)_{\cA(\cZ_2,k)}.\] 

To compactify notation, we shall denote $\cA(\cZ_2,k)$ by $\cA_k$ and ${}_{\cA(\cZ_2,k)}\widehat{BSDA}_M(Y,\Gamma,k)_{\cA(\cZ_2,k)}$ by $\widehat{BSDA}_k(Y)$ for the remainder of the proof.

As  alluded to in \cite[Sec. 2.4.3]{BorderedBimodules} we have, for each $k$, an equivalence of categories \[\cG: D_\infty(\cA_k-\cA_k^{op}) \rightarrow D_\infty(\cA_k \otimes \cA_k^{op}),\] and after replacing $\cG(\widehat{BSDA}_k(Y))$ with a quasi-isomorphic dg model as in \cite[Prop. 2.4.1]{BorderedBimodules}, we obtain that the complex \[CC(\cA_k, \widehat{BSDA}_k(Y)):=\cG(\widehat{BSDA}_k(Y)) \widetilde{\otimes}_{\cA_k^{op} \otimes \cA_k} \cA_k.\] is chain isomorphic to the Hochschild complex described in Definition \ref{defn:HochHomAinfty}.\footnote{Here we use the canonical correspondence between left $A$--modules and right $A^{op}$--modules.}
But Zarev tells us, in \cite[Defn. 8.3]{GT09081106}, that $\cG(\widehat{BSDA}_k(Y))$ is quasi-isomorphic to the $(k,k)$--strand module $\widehat{BSA}(Y,\Gamma,-\cZ_1 \amalg \cZ_2,k,k)$ (cf. \cite[Sec. 1.2]{GT09081106}), and \cite[Thm. 2]{GT09081106} tells us that the bimodule associated to 
\[\mbox{id}_{\cZ_2} := (F(\cZ_2) \times I, \Lambda \times I),\] the identity bordered sutured cobordism  from $F(\cZ_2)$ to itself (cf. \cite[Defn. 1.3]{GT09081106}), is quasi-isomorphic to the underlying algebra: \[{}_{\cA(\cZ_2)}\widehat{BSDA}_M(\mbox{id}_{\cZ_2})_{\cA(\cZ_2)} \cong  \cA(\cZ_2) \in D_\infty(\cA(\cZ_2)-(\cA(\cZ_2))^{op}).\] The desired result then follows from Zarev's pairing theorem, \cite[Thm. 1]{GT09081106}. Explicitly, for each $k$, the total homology of

\begin{eqnarray*}
CC(\cA_k, \widehat{BSDA}_k(Y)) &=&  \cG(\widehat{BSDA}_k(Y)) \widetilde{\otimes}_{\cA_k^{op} \otimes \cA_k} \cA_k \\
&=& \widehat{BSA}(Y,\Gamma,\cZ,k,k) \boxtimes \widehat{BSD}(\mbox{id}_{\cZ_2},k,k) \\
\end{eqnarray*}
agrees with $SFH(Y',\Gamma';k)$, where $SFH(Y',\Gamma';k)$ denotes the homology of the subcomplex of $CFH(Y',\Gamma')$ supported in those (relative) Spin$^c$ structures $\mathfrak{s} \in$ Spin$^c(Y',\Gamma')$ satisfying \[\langle c_1(\mathfrak{s}), [F(\cZ_2)]\rangle = \chi(F(\cZ_2)) - |{\bf Z}_2| + 2k.\] Here, $c_1(\mathfrak{s})$ denotes the first Chern class of $\mathfrak{s}$ (\cite[Sec. 3]{MR2390347}) with respect to a canonical trivialization (cf. \cite[Prf. of Thm. 1.5]{MR2390347}) of $v_0^{\perp}$, and $|{\bf Z}_2|$ denotes the number of oriented line segments (``platforms") of $\cZ_2$.

Informally, $SFH(Y',\Gamma';k)$ is the ``$k$-from-bottom" Alexander grading of $SFH(Y',\Gamma')$ with respect to $[F(\cZ_2)]$, the homology class of $F(\cZ_2)$.
\end{proof}

\begin{proof}[Proof of Theorem \ref{thm:OzSzspecseq}] 

In \cite[Thm. 6.1]{QuiverAlgebras}, we 
\begin{itemize}
	\item construct a filtration on $\cM_\sigma^{HF}$ (defined in \cite[Sec. 3]{QuiverAlgebras}), the $1$--moving--strand $A_\infty$ bordered Floer bimodule over the $1$--moving--strand algebra $B_m^{HF}$ (described in \cite[Rmk. 4.1]{QuiverAlgebras}), and
	\item identify the associated graded complex, $\mbox{gr}(\cM_\sigma^{HF})$ (an $A_\infty$ bimodule over $B_m^{Kh} = \mbox{gr}(B_m^{HF})$ using \cite[Thm. 5.1]{QuiverAlgebras}) with $\cM_\sigma^{Kh}$ (defined in \cite[Sec. 2]{QuiverAlgebras} and recalled briefly in the proof of Proposition \ref{prop:HHiscoinvar}).
\end{itemize}

As in \cite[Lem. 2.12]{QuiverAlgebras}, the $A_\infty$ Hochschild complex, \[CC(B_m^{HF}, \cM_\sigma^{HF}) := \left(\bigoplus_{k=0}^\infty \left(B_m^{HF}\right)^{\otimes k} \otimes \cM_\sigma^{HF}, \partial\right),\] inherits a filtration, and as in \cite[Lem. 2.15]{QuiverAlgebras} we see that the associated graded of the Hochschild complex is the Hochschild complex of the associated graded: \[\mbox{gr}(CC(B_m^{HF},\cM_\sigma^{HF})) = CC(B_m^{Kh},\cM_\sigma^{Kh}) \in D_\infty(B_m^{Kh}-(B_m^{Kh})^{op}).\]

\begin{figure}
\begin{center}
\resizebox{1.5in}{!}{\input{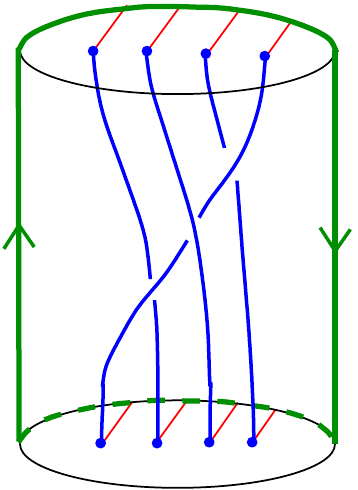_t}}
\end{center}
\caption{Lift the sutures (green) and parameterizing arcs (red) to obtain a bordered-sutured manifold structure on $\boldSigma(\sigma)$, the double cover of $D_m \times I$ branched along $\sigma$ (blue).}
\label{fig:BordSutBraid}
\end{figure}

Now consider the bordered sutured manifold, $(\boldSigma(\sigma), \Gamma, \cZ, \phi)$, obtained from the sutured manifold $D_m \times I$ by lifting the arcs and sutures pictured in Figure \ref{fig:BordSutBraid} to a bordered sutured manifold structure on the double-cover of $D_m \times I$ branched along $\sigma$.  The first author proves in \cite{GT10014323} (cf. \cite[Sec. 4]{QuiverAlgebras}) that $B_m^{HF}$ is isomorphic to $\mathcal{A}(\mathcal{Z}_2,1)$ and $\cM_\sigma^{HF}$ is quasi-isomorphic to ${}_{\cA(\cZ_2,1)}\widehat{BSDA}_M(\boldSigma(\sigma),\Gamma,1)_{\cA(\cZ_2,1)}$. Letting $\boldSigma(\widehat{\sigma})$ denote the double-branched cover of the product sutured manifold $(A \times I, \Gamma')$ branched along the closure, $\widehat{\sigma}$, of $\sigma$, and $\widetilde{\Gamma'}$ the preimage of $\Gamma'$, Proposition \ref{prop:HHbordsut} then implies:
\[SFH(\boldSigma(\widehat{\sigma}), \widetilde{\Gamma'};1) \cong HH(B_m^{HF}, \cM_\sigma^{HF}),\]
so $HH(B_m^{HF}, \cM_\sigma^{HF})$ agrees with the next-to-bottom ($\cong$ next-to-top) Alexander grading of 
\[SFH(\boldSigma(\widehat{\sigma})) \cong \left\{\begin{array}{cl}
	\widehat{HFK}(\boldSigma(\widehat{\sigma}), \widetilde{K}_B) & \mbox{when $m+1$ is even, and}\\
	\widehat{HFK}(\boldSigma(\widehat{\sigma}), \widetilde{K}_B) \otimes V & \mbox{when $m+1$ is odd.}\end{array}\right.\]
See \cite[Rmk. 7.1]{QuiverAlgebras} for an explanation of the extra tensor factor of $V$ in the odd index case.

To obtain the desired result, we now need only argue that the homology of the Hochschild complex \[CC(B_m^{Kh}, \cM_\sigma^{Kh}) = \mbox{gr}(CC(B_m^{HF},\cM_\sigma^{HF}))\] agrees with $\SKh(m(\widehat{\sigma});m-1)$.

But, noting that $\cM_\sigma^{Kh}$ is the image of $\cM_\sigma$ under the derived equivalences 
\[D_\infty(A_m-A_m^{op}) \longleftrightarrow D_\infty(B_m-B_m^{op}) \longleftrightarrow D_\infty(B_m^{Kh}-(B_m^{Kh})^{op})\] provided by \cite[Prop. 3.15]{QuiverAlgebras}, another two applications of Lemma \ref{lem:HHTensor} as in the proof of Proposition \ref{prop:HHiscoinvar} tells us that 
\[HH(B_m^{Kh},\cM_\sigma^{Kh}) \cong HH(A_m,\cM_\sigma),\] 
and Theorem \ref{thm:SKhisHH} then tells us that the homology of $CC(B_m^{Kh},\cM_\sigma^{Kh})$ is $\SKh(m(\widehat{\sigma});m-1)$, as desired.

\end{proof}

\bibliography{HochHom}

\def\polhk#1{\setbox0=\hbox{#1}{\ooalign{\hidewidth
  \lower1.5ex\hbox{`}\hidewidth\crcr\unhbox0}}}
\begin{thebibliography}{10}

\bibitem{MR2113902}
Marta~M. Asaeda, J{\'o}zef~H. Przytycki, and Adam~S. Sikora.
\newblock Categorification of the {K}auffman bracket skein module of
  {$I$}-bundles over surfaces.
\newblock {\em Algebr. Geom. Topol.}, 4:1177--1210 (electronic), 2004.

\bibitem{GT10014323}
Denis Auroux.
\newblock Fukaya categories of symmetric products and bordered
  {H}eegaard-{F}loer homology.
\newblock {\em J. G\"okova Geom. Topol. GGT}, 4:1--54, 2010.
\newblock math.GT/1001.4323.

\bibitem{QuiverAlgebras}
Denis Auroux, J.~Elisenda Grigsby, and Stephan~M. Wehrli.
\newblock On {K}hovanov-{S}eidel quiver algebras and bordered {F}loer homology.
\newblock math.GT/1107.2841, 2011.

\bibitem{GT12122222}
John~A. Baldwin and J.~Elisenda Grigsby.
\newblock Categorified invariants and the braid group.
\newblock math.GT/1212.2222, 2012.

\bibitem{BrundanStroppel}
Jonathan Brundan and Catharina Stroppel.
\newblock Highest weight categories arising from {K}hovanov's diagram algebra
  {I}: cellularity.
\newblock {\em Mosc. Math. J.}, 11(4):685--722, 2011.

\bibitem{QA0610054}
Yanfeng Chen and Mikhail Khovanov.
\newblock An invariant of tangle cobordisms via subquotients of arc rings.
\newblock math.QA/0610054, 2006.

\bibitem{FrenkelStroppelSussan}
Igor Frenkel, Catharina Stroppel, and Joshua Sussan.
\newblock Categorifying fractional {E}uler characteristics, {J}ones-{W}enzl
  projectors and $3j$ symbols.
\newblock {\em Quantum Top.}, (3):181--253, 2010.

\bibitem{AnnularLinks}
J.~Elisenda Grigsby and Stephan~M. Wehrli.
\newblock Khovanov homology, sutured {F}loer homology and annular links.
\newblock {\em Algebr. Geom. Topol.}, 10(4):2009--2039, 2010.

\bibitem{JacoFest}
J.~Elisenda Grigsby and Stephan~M. Wehrli.
\newblock On gradings in {K}hovanov homology and sutured {F}loer homology.
\newblock In {\em Topology and geometry in dimension three}, volume 560 of {\em
  Contemp. Math.}, pages 111--128. Amer. Math. Soc., Providence, RI, 2011.

\bibitem{MR2253454}
Andr{\'a}s Juh{\'a}sz.
\newblock Holomorphic discs and sutured manifolds.
\newblock {\em Algebr. Geom. Topol.}, 6:1429--1457 (electronic), 2006.

\bibitem{MR2390347}
Andr{\'a}s Juh{\'a}sz.
\newblock Floer homology and surface decompositions.
\newblock {\em Geom. Topol.}, 12(1):299--350, 2008.

\bibitem{MR1854636}
Bernhard Keller.
\newblock Introduction to {$A$}-infinity algebras and modules.
\newblock {\em Homology Homotopy Appl.}, 3(1):1--35, 2001.

\bibitem{MR2258042}
Bernhard Keller.
\newblock {$A$}-infinity algebras, modules and functor categories.
\newblock In {\em Trends in representation theory of algebras and related
  topics}, volume 406 of {\em Contemp. Math.}, pages 67--93. Amer. Math. Soc.,
  Providence, RI, 2006.

\bibitem{MR1740682}
Mikhail Khovanov.
\newblock A categorification of the {J}ones polynomial.
\newblock {\em Duke Math. J.}, 101(3):359--426, 2000.

\bibitem{Tangles}
Mikhail Khovanov.
\newblock A functor-valued invariant of tangles.
\newblock {\em Alg. {G}eom. {T}opol.}, 2:665--741, 2002.

\bibitem{MR1862802}
Mikhail Khovanov and Paul Seidel.
\newblock Quivers, {F}loer cohomology, and braid group actions.
\newblock {\em J. Amer. Math. Soc.}, 15(1):203--271 (electronic), 2002.

\bibitem{kenji}
Kenji Lef\`evre-Hasegawa.
\newblock {\em Sur les {$A_{\infty}$}-cat{\'e}gories}.
\newblock PhD thesis, Universit{\'e} Paris 7, 2003.
\newblock math.CT/0310337.

\bibitem{GT08100687}
Robert Lipshitz, Peter Ozsv{\'a}th, and Dylan Thurston.
\newblock Bordered {H}eegaard {F}loer homology: {I}nvariance and pairing.
\newblock math.GT/0810.0687, 2008.

\bibitem{BorderedBimodules}
Robert Lipshitz, Peter~S. \Ozsvath, and Dylan~P. Thurston.
\newblock Bimodules in bordered {H}eegaard {F}loer homology.
\newblock math.GT/1003.0598, 2010.

\bibitem{CombHFK}
Ciprian Manolescu, Peter Ozsv{\'a}th, and Sucharit Sarkar.
\newblock A combinatorial description of knot {F}loer homology.
\newblock {\em Annals of Math.}, 169(2):633--660, 2009.

\bibitem{MR2065507}
Peter Ozsv{\'a}th and Zolt{\'a}n Szab{\'o}.
\newblock Holomorphic disks and knot invariants.
\newblock {\em Adv. Math.}, 186(1):58--116, 2004.

\bibitem{MR2141852}
Peter Ozsv{\'a}th and Zolt{\'a}n Szab{\'o}.
\newblock On the {H}eegaard {F}loer homology of branched double-covers.
\newblock {\em Adv. Math.}, 194(1):1--33, 2005.

\bibitem{GT0306378}
Jacob~Andrew Rasmussen.
\newblock {\em Floer homology and knot complements}.
\newblock PhD thesis, Harvard University, 2003.

\bibitem{GT07060741}
Lawrence~P. Roberts.
\newblock On knot {F}loer homology in double branched covers.
\newblock math.GT/0706.0741, 2007.

\bibitem{SeidelBook}
Paul Seidel.
\newblock {\em Fukaya categories and {P}icard-{L}efschetz theory}.
\newblock Zurich Lectures in Advanced Mathematics. European Mathematical
  Society (EMS), Z\"urich, 2008.

\bibitem{StroppelAlgebra}
Catharina Stroppel.
\newblock Parabolic category $\mathcal{O}$, perverse sheaves on {G}rassmanians,
  {S}pringer fibers and {K}hovanov homology.
\newblock {\em Compositio Math.}, pages 954--992, 2009.

\bibitem{StroppelWebster}
Catharina Stroppel and Ben Webster.
\newblock $2$--block {S}pringer fibers: convolution algebras and coherent
  sheaves.
\newblock {\em Comment. Math. Helv.}, 87(2):477--520, 2012.

\bibitem{GT09081106}
Rumen Zarev.
\newblock Bordered {F}loer homology for sutured manifolds.
\newblock math.GT/0908.1106, 2009.

\end{thebibliography}
\end{document}